\newcommand{\prodint}[1]{\left\langle {#1}\right\rangle}
\def\BState{\State\hskip-\ALG@thistlm}
\def\downbar#1{
\setbox10=\hbox{$#1$}
            \dimen10=\ht10 \advance\dimen10 by 2.5pt
            \ifdim \dimen10<15pt 
               \advance\dimen10 by -0.5pt
               \dimen11=\dimen10
               \advance\dimen10 by 2.5pt
               \lower \dimen11
            \else \lower \ht10 \fi
            \hbox {\hskip 1.5pt \vrule height \dimen10 depth \dp10}}
\def\upbar#1{
\setbox10=\hbox{$#1$}
            \dimen10=\ht10 \advance\dimen10 by \dp10 \advance\dimen10 by 2.5pt
            \ifdim \dimen10<15pt 
                \advance\dimen10 by 2pt \fi
            \raise 2.5pt \hbox {\hskip -1.5pt \vrule height \dimen10}}
\newtheorem{definition}{\bf Definition}[section]
\newtheorem{theorem}{\bf Theorem}[section]
\newtheorem{proposition}{\bf Proposition}[section]
\newtheorem{lemma}{\bf Lemma }[section]
\newtheorem{corollary}{\bf Corollary}[section]
\newtheorem{remark}{\bf Remark}[section]
\numberwithin{equation}{section}
\journal{}
\begin{document}

\begin{frontmatter}


\title{Classical orthogonal polynomials revisited\footnote{My friend and coauthor Professor J. Petronilho passed away  on August 27, 2021. In the academic years 2016/17 and 2020/21 we taught a PhD course on Orthogonal Polynomials and Special Functions at the Universities of Coimbra and Porto (Portugal). This manuscript go back to the beginning of 2015 and it was later including in a monograph \cite{P} on the subject that Professor Petronilho finished  in 2016. (His monographs was during all these years actively used by students and colleagues, mainly from Portugal and Spain.) Probably, for this reason or the fact that we had started working on \cite{CMP}, which allow us to rewrite many results in a more general framework, we  shelved this project. And this would have been its final destination if I had never read some recent publications where the same topic is addressed. But after that, I felt that is time to dust off this manuscript, allowing active readers the freedom to draw their own conclusions.  Finally, I thank Professor Petronilho's youngest daughter, In\^es, who found these old files and sent them to me.
}}

%

%


\author{K. Castillo} \ead{kenier@mat.uc.pt}
\author{J. Petronilho}

\address{CMUC, University of Coimbra, Department of Mathematics.\\ EC Santa Cruz, 3001-501 Coimbra, Portugal.}

\date{\today}

\begin{abstract}

This manuscript contains a small portion of the algebraic theory of orthogonal polynomials developed by Maroni and their applicability to the study and characterization of the classical families, namely Hermite, Laguerre, Jacobi, and Bessel polynomials. It is presented a cyclical proof of some of the most relevant characterizations, particularly those due to Al-Salam and Chihara, Bochner, Hahn, Maroni, and McCarthy. Two apparently new characterizations are also added. Moreover, it is proved through an equivalence relation that, up to constant factors and affine changes of variables, the four families of polynomials named above are the only families of classical orthogonal polynomials.
\end{abstract}

\begin{keyword}
Moment linear functionals \sep classical orthogonal polynomials
\sep algebraic theory of orthogonal polynomials
\MSC[2010] 42C05 \sep 33C45
\end{keyword}

\end{frontmatter}

%

\section{Orthogonal polynomials: The algebraic theory}

Denote by $\mathrm{M}_m(\mathrm{A})$ the ring of all square matrices of order $m$ over the commutative ring with unit $\mathrm{A}$.  Consider the free modulo $\mathcal{M}=\mathrm{M}_m(\mathrm{A})[X]$ of all ``polynomials'' in one indeterminate $X$ with coefficients in $\mathrm{M}_m(\mathrm{A})$. (Assume that  the indeterminate is contained in the centre of $\mathcal{M}$.)  Any free system of polynomials over $\mathrm{M}_m(\mathrm{A})$ is basis of $\mathcal{M}$. Moreover,  the dual of a finite generated free module is a finitely generated free module. However, the dual system associated with a free sequence $(p_n)_{n\geq 0}$ in $\mathcal{M}$, $(\mathbf{a}_n)_{n \geq 0}$, is not  a system of generators of its algebraic dual, $\mathcal{M}^*$, in general $($cf. \cite[VII, \S 3, Exercise 10]{AI}$)$. But if we consider, for instance, $\mathcal{P}=\mathbb{C}[X]$, instead of $\mathcal{M}$, endowed with an ``appropriate" topology, then  $\mathcal{P}^*=\mathcal{P}'$ $($cf. \cite[Exercise 13.1, p. 134]{Treves1967}$)$, $\mathcal{P}'$ being the topological dual $\mathcal{P}$. Hence 
\begin{align}\label{MM}
\mathbf{u}=\sum_{n=0}^\infty \prodint{p_n, \mathbf{u}} \mathbf{a}_n,
\end{align}
for all $\mathbf{u} \in \mathcal{P}'$. (Recall that for every pair of elements $x \in \mathrm{E}$ and $\mathbf{x} \in \mathrm{E}^*$, the element $\mathbf{x}(x)$ of $\mathrm{E}$ is denote by $\prodint{x, \mathbf{x}}$, where $\mathrm{E}$ is a left $\mathrm{A}$-module whose domain of operators is $\mathrm{A}$.) This observation is the cornerstone of Rota's umbral calculus $($cf. \cite{Roman}$)$ and the algebraic theory of orthogonal polynomials (OP) founded by Maroni $($cf. \cite{Maroni1985, Maroni1988, Maroni1991a}$)$. For further reading on how to re-establish the ``symmetry" between an infinite dimensional vector space and its dual see \cite[II, \S 6]{TVS}. 

This manuscript contains a short exposition of Maroni's approach on OP and their applicability to the study and characterization of the classical families of OP. A must read on this topic are  \cite{Maroni1991b, Maroni1993, Maroni1994}. Even a proof of  Theorem \ref{ThmClassicalOP} can be found,  in one way or another, in the works of Maroni. However, among other questions of pedagogical nature, in this survey we present a cyclical proof of our main results, Theorem \ref{ThmClassicalOP}. Moreover, the characterizations (C4) and (C4') are apparently new ones. While is true that  any classical functional is equivalent to one of the canonical forms given in Table \ref{Table1} below, Theorem \ref{canonic-forms-classical} rigorously reflects this property and, as far as we know, it is not available in the literatura. Many results are stated without proof, either because they are simple to prove or because they can be easily found in the literature. Other results that fit in these two categories are, however, proved when the proof methods are different or more attractive, from our point of view, to the existing one. The positive definite case, widely discussed in the literature, is intentionally omitted. Finally, as the reader will notice, some results are ``purely" algebraic. In any case, we will not make distinction between $\mathcal{P}^*$ and $\mathcal{P}'$ after Proposition \ref{P*=Plinha} below. 

\subsection{The spaces $\mathcal{P}$ and $\mathcal{P}'$}

It is useful to consider OP as test functions living in an appropriate locally convex space (LCS), which we denote by $\mathcal{P}$. This LCS is the set of all polynomials (with real or complex coefficients) endowed with a strict inductive limit topology, so that
\begin{equation}\label{P=indlimPn}
\mathcal{P}=\bigcup_{n=0}^\infty\mathcal{P}_n=\mbox{\rm ind\,lim}_n\,\mathcal{P}_n\; ,
\end{equation}
where $\mathcal{P}_n$ is the space of all polynomials of degree at most $n$.
(For the sake of simplicity, we do not distinguish between {\it polynomial} and {\it polynomial function}.)
Since $\mathcal{P}_n$ is a finite dimensional vector space, all its norms are equivalent,
so there is no need to specify any particular one.
For the development of the theory to be presented here it is not important to know much about the topology (the definition and basic properties of LCS, including inductive limit topologies, can be found, e.g., in Chapter V of
Reed and Simon's book \cite{ReedSimon1972}, but the reader should keep in mind that the reason why such topology is introduced is because it implies the following fundamental property: 

\begin{proposition}\label{P*=Plinha}
Let $\mathcal{P}=\mbox{\rm ind\,lim}_n\,\mathcal{P}_n$, as in $(\ref{P=indlimPn})$,
and let $\mathcal{P}^*$ and $\mathcal{P}'$ be the algebraic and the
topological duals of $\mathcal{P}$, respectively. Then
\begin{equation}\label{P*equalsPprime}
\mathcal{P}'=\mathcal{P}^*\,.
\end{equation}
\end{proposition}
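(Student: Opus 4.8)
The plan is to show both inclusions. Since every continuous linear functional is in particular linear, $\mathcal{P}'\subseteq\mathcal{P}^*$ is immediate and requires no argument. The content is the reverse inclusion $\mathcal{P}^*\subseteq\mathcal{P}'$: every linear functional on $\mathcal{P}$ is automatically continuous for the strict inductive limit topology.

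First I would recall the universal property of the inductive limit topology: a linear map $\mathbf{u}\colon\mathcal{P}\to\mathbb{C}$ is continuous if and only if its restriction $\mathbf{u}|_{\mathcal{P}_n}$ to each step $\mathcal{P}_n$ is continuous, where $\mathcal{P}_n$ carries the subspace topology, which (because the inductive limit here is strict, the $\mathcal{P}_n$ are closed in $\mathcal{P}_{n+1}$, and each inclusion is a topological embedding) coincides with the unique Hausdorff vector topology on the finite-dimensional space $\mathcal{P}_n$. Then I would invoke the standard fact that on a finite-dimensional normed (equivalently, Hausdorff topological vector) space \emph{every} linear functional is continuous — this is where finite-dimensionality of each $\mathcal{P}_n$, already emphasized in the text, does the work. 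Combining these: given an arbitrary $\mathbf{u}\in\mathcal{P}^*$, each restriction $\mathbf{u}|_{\mathcal{P}_n}$ is linear on a finite-dimensional space, hence continuous, hence by the universal property $\mathbf{u}$ is continuous on $\mathcal{P}$, i.e. $\mathbf{u}\in\mathcal{P}'$. This yields $\mathcal{P}^*\subseteq\mathcal{P}'$ and therefore equality.

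The only genuine obstacle is justifying that the subspace topology induced on $\mathcal{P}_n$ by the inductive limit topology is the canonical finite-dimensional topology (so that the ``linear implies continuous'' fact applies). This is a known but slightly delicate property of \emph{strict} inductive limits of an increasing sequence of locally convex spaces: each $\mathcal{P}_n$ is topologically embedded in $\mathcal{P}$, and the induced topology on $\mathcal{P}_n$ is the original one. I would cite the relevant statement from the locally convex spaces literature (for instance the treatment of strict inductive limits in Reed and Simon \cite{ReedSimon1972}, already referenced above, or Tr\`eves \cite{Treves1967}) rather than reprove it, since it is standard. With that in hand the proof is essentially two lines: restriction to each finite-dimensional step plus automatic continuity of linear functionals there.

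One could alternatively argue concretely using the dual basis expansion: writing $p=\sum_{k=0}^{n}c_k X^k\in\mathcal{P}_n$, a linear functional acts by $\prodint{p,\mathbf{u}}=\sum_{k=0}^{n}c_k\,\prodint{X^k,\mathbf{u}}$, which is a finite linear combination of the coordinate functionals $p\mapsto c_k$; these are continuous on $\mathcal{P}_n$, so $\mathbf{u}|_{\mathcal{P}_n}$ is continuous, and one concludes as before. This makes the mechanism transparent and also motivates the expansion \eqref{MM} used later, but it still rests on the same embedding property of the strict inductive limit, so I would present the topological argument as the main line and mention this coordinate computation as the underlying reason it works.
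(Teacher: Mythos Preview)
Your proposal is correct and follows essentially the same route as the paper's proof: the inclusion $\mathcal{P}'\subseteq\mathcal{P}^*$ is declared obvious, and the reverse inclusion is obtained by invoking the universal property of the inductive limit (continuity is equivalent to continuity of each restriction $\mathbf{u}|_{\mathcal{P}_n}$) together with automatic continuity of linear functionals on finite-dimensional spaces. The paper's version is terser---it does not pause over the point you flag about the induced topology on $\mathcal{P}_n$ coinciding with its canonical finite-dimensional topology---so your added care there and the alternative coordinate computation are refinements rather than a different argument.
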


\begin{proof}
Obviously, $\mathcal{P}'\subseteq\mathcal{P}^*$.
To prove that $\mathcal{P}^*\subseteq\mathcal{P}'$, take ${\bf u}\in\mathcal{P}^*$.
From the basic properties of the inductive limit topologies, to prove that ${\bf u}\in\mathcal{P}'$
it suffices to show that the restriction ${\bf u}|\mathcal{P}_n$ is continuous for every $n$.
But this is a trivial assertion, since ${\bf u}|\mathcal{P}_n$ is a linear functional
defined on a finite dimensional normed space.
\end{proof}

Equality (\ref{P*equalsPprime}) means that every linear functional defined in $\mathcal{P}$ is continuous (for the strict inductive limit topology in $\mathcal{P}$). This is a ``curious" property, because, for instance, \eqref{P*equalsPprime} is not true for a normed vector space $N$. Indeed, $N'=N^*$ if ${\rm dim}\,N<\infty$, whilst $N'\neq N^*$ whenever ${\rm dim}\,N=\infty$. Note that being a strict inductive limit of the spaces $\mathcal{P}_n$, and taking into account that each $\mathcal{P}_n$ is a proper closed subspace of $\mathcal{P}_{n+1}$ (so that $\mathcal{P}$ is indeed an hyper strict inductive limit of the spaces $\mathcal{P}_n$), the general theory of LCS ensures that $\mathcal{P}$ cannot be a metrizable space, and so \emph{a fortiori} it is not a normed space---or, to be more precise, it is not possible to provide $\mathcal{P}$ with a norm that generates in it the above inductive limit topology.

In $\mathcal{P}'$ we consider the weak dual topology, which, by definition, is generated by the family of semi-norms
$s_p:\mathcal{P}'\to[0,+\infty[$, $p\in\mathcal{P}$, defined by
\begin{equation}\label{semip1}
s_p\big({\bf u}\big)=|\langle {\bf u},p\rangle|\; ,\quad {\bf u}\in\mathcal{P}'\; .
\end{equation}
It turns out that this family of semi-norms $s_p$ is equivalent to the family of semi-norms
$|\cdot|_n:\mathcal{P}'\to[0,+\infty[$, $n\in\mathbb{N}_0$, defined by
\begin{equation}\label{sharp1}
|{\bf u}|_n=\max_{0\leq k\leq n}|\langle {\bf u},x^k\rangle|\; ,\quad {\bf u}\in\mathcal{P}'\; .
\end{equation}
Indeed, the following proposition holds.
\begin{proposition}\label{Snumeravel}
$\mathcal{S}=\{s_p:p\in\mathcal{P}\}$ and $\mathcal{S}_\sharp=\{|\cdot|_n:n\in\mathbb{N}_0\}$,
with $s_p$ and $|\cdot|_n$ given by $(\ref{semip1})$--$(\ref{sharp1})$,
are equivalent families of seminorms in  $\mathcal{P}'$, provided $\mathcal{P}=\mbox{\rm ind\,lim}_n\,\mathcal{P}_n$.
\end{proposition}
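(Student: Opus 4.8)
The plan is to invoke the standard criterion for equivalence of two families of seminorms on a vector space: the families $\mathcal{S}$ and $\mathcal{S}_\sharp$ generate the same locally convex topology on $\mathcal{P}'$ if and only if every seminorm in $\mathcal{S}$ is dominated by a constant times the maximum of finitely many seminorms from $\mathcal{S}_\sharp$, and conversely. So I would split the argument into the two containments and estimate directly, using only linearity of the elements of $\mathcal{P}'$ and the fact that every $p\in\mathcal{P}$ lies in some $\mathcal{P}_n$.

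First I would dispatch the inclusion ``$\mathcal{S}_\sharp$ is dominated by $\mathcal{S}$'', which is immediate: for each $n\in\mathbb{N}_0$ one has, directly from $(\ref{sharp1})$ and $(\ref{semip1})$,
\begin{equation*}
|{\bf u}|_n=\max_{0\leq k\leq n}|\langle {\bf u},x^k\rangle|=\max_{0\leq k\leq n}s_{x^k}({\bf u})\; ,\qquad {\bf u}\in\mathcal{P}'\; ,
\end{equation*}
so $|\cdot|_n$ is literally the maximum of the finitely many seminorms $s_{x^0},\dots,s_{x^n}$ drawn from $\mathcal{S}$.

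Next I would handle the reverse inclusion ``$\mathcal{S}$ is dominated by $\mathcal{S}_\sharp$''. Given $p\in\mathcal{P}$, let $n=\deg p$ and write $p=\sum_{k=0}^n a_k x^k$ with $a_k\in\mathbb{C}$ (or $\mathbb{R}$). Then for every ${\bf u}\in\mathcal{P}'$, linearity gives
\begin{equation*}
s_p({\bf u})=\Big|\sum_{k=0}^n a_k\langle {\bf u},x^k\rangle\Big|\leq \sum_{k=0}^n|a_k|\,|\langle {\bf u},x^k\rangle|\leq \Big(\sum_{k=0}^n|a_k|\Big)\,|{\bf u}|_n\; ,
\end{equation*}
so with the constant $C_p=\sum_{k=0}^n|a_k|$ one has $s_p\leq C_p\,|\cdot|_n$. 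Combining the two estimates shows the two families induce the same system of neighbourhoods of $0$, hence the same topology, which is the assertion. The only ``subtlety'' worth a remark is that, by Proposition \ref{P*=Plinha}, $\mathcal{P}'$ coincides with the full algebraic dual, so the seminorms above are defined on all linear functionals; but since the bounds use nothing beyond linearity, there is no real obstacle here — the proof is entirely elementary, and the main point is simply to record that the passage from the ``functional-indexed'' family $\mathcal{S}$ to the countable family $\mathcal{S}_\sharp$ costs nothing.
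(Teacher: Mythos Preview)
Your proof is correct and follows essentially the same route as the paper's: both argue the two domination inequalities directly, writing $p=\sum_{k=0}^n a_k x^k$ to get $s_p\leq\big(\sum_k|a_k|\big)\,|\cdot|_n$, and recognizing $|\cdot|_n$ as controlled by the finitely many $s_{x^k}$ (the paper uses the bound $|{\bf u}|_n\leq\sum_{k=0}^n s_{x^k}({\bf u})$ rather than your sharper observation that $|\cdot|_n$ is literally their maximum, but this is cosmetic).
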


\begin{proof}
Given $p\in\mathcal{P}$, putting $p(x)=\sum_{j=0}^na_jx^j$ and $C(p)=\sum_{j=0}^n|a_j|$, we have
$$
s_{p}({\bf u})=|\langle {\bf u},p\rangle|=\Big| \sum_{j=0}^na_j\langle {\bf u},x^j\rangle \Big|
\leq C(p)|{\bf u}|_n\,,\quad \forall{\bf u}\in\mathcal{P}'\,.
$$
On the other hand, given $n\in\mathbb{N}_0$, setting $p_j(x)=x^j$ ($j=0,1,\ldots,n$), we have
$$
|{\bf u}|_n=\max_{0\leq j\leq n}|\langle {\bf u},x^j\rangle|
\leq\sum_{j=0}^n|\langle {\bf u},x^j\rangle|=\sum_{j=0}^n s_{p_j}({\bf u})
\,,\quad \forall{\bf u}\in\mathcal{P}'\,.
$$
Thus, $\mathcal{S}$ and $\mathcal{S}_\sharp$ are equivalent families of semi-norms
(see \cite[p.126]{ReedSimon1972}).
\end{proof}

\begin{remark}
Since $\mathcal{S}_\sharp$ is a countable family of seminorms, then
$\mathcal{P}'$ is a metrizable space, a metric being given by
\begin{equation*}
\varrho({\bf u},{\bf v})=\sum_{n=0}^\infty\frac{1}{2^n}\frac{|{\bf u}-{\bf v}|_n}{1+|{\bf u}-{\bf v}|_n}\; ,
\quad {\bf u},{\bf v}\in\mathcal{P}'\; .
\end{equation*}
Moreover, $\mathcal{P}'$ is a Fr\'echet space.
\end{remark}

\subsection{Dual basis in $\mathcal{P}^*$}
In $\mathcal{P}^*$, addition and multiplications by scalars can be defined by
\begin{align*}
\prodint{\mathbf{u}+\mathbf{v}, x^n}&=\prodint{\mathbf{u}, x^n}+\prodint{\mathbf{v}, x^n}, \quad \mathbf{v} \in \mathcal{P}^*,\\
\prodint{\alpha \mathbf{u}, x^n}&=\alpha \prodint{\mathbf{u}, x^n}
\end{align*}
for all $n\in\mathbb{N}_0$. $\mathcal{P}^*$, endowed with these operations, is a vector space over $\mathbb{C}$.  In the vector space $\mathcal{P}^*$, the identity for the additivity is denoted by $\mathbf{0}$ and called the zero (or the null element). The zero is therefore defined by the relation $\prodint{\mathbf{0}, x^n}=0$ for all $n \in \mathbb{N}_0$. Of course, the elements of $\mathcal{P}^*$ can not only be added, but also multiplied (the {\em Cauchy product}) in order to make the vector space $\mathcal{P}^*$ into an {\em algebra}.  Since we work mainly on $\mathcal{P}^*$ instead of $\mathcal{P}$, it would be explicitly build bases in $\mathcal{P}^*$. This makes sense, since (\ref{P*equalsPprime}) allow us writing expansions (finite or infinite sums) of the elements of $\mathcal{P}^*$ in terms of the elements of a given basis, in the sense of the weak dual topology. Such basis in $\mathcal{P}^*$ may be achieved in a natural way, by analogy with the case of finitely generated free modules. A {\sl simple set} in $\mathcal{P}$ is a sequence of polynomials, $\{R_n\}_{n\geq0}$, such that ${\rm deg}\,R_n=n$ for every $n\in\mathbb{N}_0$ (where $R_0\equiv\mbox{\rm const.}\neq0$). To any simple set in $\mathcal{P}$, $\{R_n\}_{n\geq0}$, we may associate a {\sl dual basis}, which, by definition, is a sequence of linear functionals $\{{\bf a}_n\}_{n\geq0}$, being ${\bf a}_n:\mathcal{P}\to\mathbb{C}$, such that
$$
\langle {\bf a}_n,R_k\rangle=\delta_{n,k},\quad n,k=0,1,2,\dots\; ,
$$
where $\delta_{n,k}$ represents the Kronecker symbol ($\delta_{n,k}=1$ if $n=k$; $\delta_{n,k}=0$ if $n\neq k$).

\begin{proposition}\label{expDB1}
Let $\{R_n\}_{n\geq0}$ be a simple set in $\mathcal{P}$ and $\{{\bf a}_n\}_{n\geq0}$ the associated dual basis.
Let ${\bf u}\in\mathcal{P}^*$. Then
\begin{equation}\label{expDB2}
{\bf u}=\sum_{n=0}^\infty\langle{\bf u},R_n\rangle\,{\bf a}_n\, , 
\end{equation}
in the sense of the weak dual topology in $\mathcal{P}'$.
\end{proposition}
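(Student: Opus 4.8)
The plan is to reduce the statement to the elementary fact that convergence of a sequence of partial sums in the weak dual topology of $\mathcal{P}'$ is nothing but pointwise convergence on $\mathcal{P}$, combined with the observation that a simple set is an algebraic (Hamel) basis of $\mathcal{P}$. First I would record that, since $\mathrm{deg}\,R_n=n$ for every $n$, the triangularity of the system $\{R_n\}_{n\geq0}$ with respect to the canonical basis $\{x^n\}_{n\geq0}$ shows that $\{R_n\}_{n\geq0}$ is a basis of $\mathcal{P}$: each $p\in\mathcal{P}_N$ admits a unique representation $p=\sum_{k=0}^{N}c_k(p)\,R_k$ with $c_k(p)\in\mathbb{C}$, and the defining relation $\langle{\bf a}_n,R_k\rangle=\delta_{n,k}$ then gives $\langle{\bf a}_n,p\rangle=c_n(p)$ for $0\leq n\leq N$ and $\langle{\bf a}_n,p\rangle=0$ for $n>N$.

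Next, for ${\bf u}\in\mathcal{P}^*$ (which by Proposition \ref{P*=Plinha} coincides with $\mathcal{P}'$) I would introduce the partial sums
\[
{\bf u}_N:=\sum_{n=0}^{N}\langle{\bf u},R_n\rangle\,{\bf a}_n\in\mathcal{P}^*,\qquad N\in\mathbb{N}_0,
\]
and test them against an arbitrary $p\in\mathcal{P}$. Putting $N_0=\mathrm{deg}\,p$ and using the previous step, for every $N\geq N_0$ one has
\[
\langle{\bf u}_N,p\rangle=\sum_{n=0}^{N}\langle{\bf u},R_n\rangle\langle{\bf a}_n,p\rangle
=\sum_{n=0}^{N_0}\langle{\bf u},R_n\rangle\,c_n(p)
=\Big\langle{\bf u},\sum_{n=0}^{N_0}c_n(p)R_n\Big\rangle=\langle{\bf u},p\rangle,
\]
so the numerical sequence $\big(\langle{\bf u}_N,p\rangle\big)_N$ is eventually constant and equal to $\langle{\bf u},p\rangle$. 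In particular $s_p({\bf u}_N-{\bf u})=|\langle{\bf u}_N-{\bf u},p\rangle|\to 0$ for every $p\in\mathcal{P}$, and by the definition of the weak dual topology — generated by the seminorms $s_p$ of \eqref{semip1} (equivalently, by Proposition \ref{Snumeravel}, by the seminorms $|\cdot|_n$) — this means exactly that ${\bf u}_N\to{\bf u}$, i.e.\ \eqref{expDB2} holds.

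Finally, I would add a short remark on uniqueness: if also ${\bf u}=\sum_{n\geq0}\lambda_n{\bf a}_n$ in the same (weak dual) sense, then pairing with $R_k$ and using that $\langle\,\cdot\,,R_k\rangle$ is weak-dual continuous forces $\lambda_k=\langle{\bf u},R_k\rangle$ for every $k$. The only point that requires any care — and the only place the topology genuinely enters — is the justification that weak-dual convergence of the partial sums amounts to pointwise convergence on $\mathcal{P}$; once \eqref{semip1} is invoked this is immediate, and there is no real analytic obstacle, since the "infinite sum" is in fact locally finite when evaluated at any fixed polynomial. Thus the main content of the proof is purely algebraic (the basis property of a simple set), the topology playing only a bookkeeping role.
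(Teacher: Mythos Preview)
Your proof is correct and follows essentially the same approach as the paper: both arguments verify weak-dual convergence by showing that $\langle {\bf u}_N-{\bf u},p\rangle$ is eventually zero for every test polynomial, exploiting that a simple set is an algebraic basis of $\mathcal{P}$. The only cosmetic difference is that the paper tests directly on the basis elements $R_k$ (using $\langle{\bf a}_n,R_k\rangle=\delta_{n,k}$), whereas you first expand a general $p$ in the $R_k$'s and then compute; your added uniqueness remark is a harmless extra.
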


\begin{proof}
Notice first that the assertion makes sense, according with (\ref{P*equalsPprime}).
To prove it, fix $N\in\mathbb{N}$ and let
$${\bf s}_N=\sum_{n=0}^{N-1}\lambda_n{\bf a}_n,\quad \lambda_n=\langle{\bf u},R_n\rangle$$
be the partial sum of order $N$ of the series appearing in (\ref{expDB2}).
We need to show that
$$
\lim_{N\to\infty}\langle {\bf s}_N-{\bf u},p\rangle=0\;,\quad\forall p\in\mathcal{P}\;.
$$
Clearly, it suffices to prove that this equality holds for $p\in\{R_0,R_1,R_2,\dots\}$.
Indeed, fix $k\in\mathbb{N}_0$. Then, for $N>k$,
$$
\langle {\bf s}_N-{\bf u},R_k\rangle
=\sum_{n=0}^{N-1}\langle{\bf u},R_n\rangle\langle{\bf a}_n,R_k\rangle -\langle{\bf u},R_k\rangle
=0\; ,
$$
hence $\;\lim_{N\to\infty}\langle {\bf s}_N-{\bf u},R_k\rangle=0$.
\end{proof}

\subsection{Basic operations in $\mathcal{P}$ and $\mathcal{P}'$}
Given a functional ${\bf u}\in\mathcal{P}'$, we will denote by
$$
u_n=\langle{\bf u},x^n\rangle\;,\quad n\in\mathbb{N}_0\; ,
$$
the {\it moment} of order $n$ of ${\bf u}$.
Clearly, if ${\bf u}$ and ${\bf v}$ are two functionals in $\mathcal{P}'$
such that the corresponding sequences of moments satisfy
$u_n=v_n$ for all $n\in\mathbb{N}_0$, then ${\bf u}={\bf v}$.
Therefore, each functional ${\bf u}\in\mathcal{P}'$ is uniquely determined
by its sequence of moments. Define operators $M_\phi$ and $T$, from $\mathcal{P}$ into $\mathcal{P}$, by
\begin{equation}\label{thetac}
M_\phi p(x)= \phi(x)p(x)\;,\quad Tp(x)= -p'(x)\;,
\end{equation}
where $\phi\in\mathcal{P}$ (fixed) and $'$ denotes derivative with respect to $x$.
Let $M_\phi'$ and $T'$ be the corresponding dual operators.
For each ${\bf u}\in\mathcal{P}'$, the images $M_\phi'{\bf u}$ and $T'{\bf u}$
are elements (functionals) in $\mathcal{P}'$,
hereafter denoted by $\phi\,{\bf u}$ and $D{\bf u}$.

\begin{definition}\label{def-left-mult}
Let ${\bf u}\in\mathcal{P}'$, $\phi\in\mathcal{P}$, and $c\in\mathbb{C}$.
\begin{enumerate}
\item[{\rm (i)}]
the {\sl left multiplication} of ${\bf u}$ by $\phi$, denoted by $\phi{\bf u}$,
is the functional in $\mathcal{P}'$ defined by
$$
\langle \phi {\bf u}, p\rangle=\langle {\bf u}, \phi p\rangle\;,\quad p\in\mathcal{P}\;;
$$
\item[{\rm (ii)}]
the {\sl derivative} of ${\bf u}$, denoted by $D{\bf u}$,
is the functional in $\mathcal{P}'$ defined by
$$
\langle D{\bf u}, p\rangle=-\langle{\bf u}, p'\rangle\;,\quad p\in\mathcal{P}\;;
$$
\end{enumerate}
\end{definition}

Note that these definitions, introduced by duality with respect to the
operators defined in (\ref{thetac}), are in accordance with those usually given in the
Theory of Distributions (this explains the minus sign appearing in the second definition).
Note also that
$$
D(\phi{\bf u})=\phi'{\bf u}+\phi\,D{\bf u}\,,\quad
{\bf u}\in\mathcal{P}'\;,\;\phi\in\mathcal{P}\;.
$$

\begin{definition}[translation operators]\label{taub}
Let $b\in\mathbb{C}$.
\begin{enumerate}
\item[{\rm (i)}]
The {\sl translator operator} on $\mathcal{P}$ is $\tau_b:\mathcal{P}\to\mathcal{P}$ $(p\mapsto\tau_bp)$ defined by
\begin{equation}\label{def-taub}
\tau_bp(x)=p(x-b)\;,\quad p\in\mathcal{P}\;;
\end{equation}
\item[{\rm (ii)}]
The {\sl translator operator} on $\mathcal{P}'$ is $\bm{\tau}_b=\tau_{-b}^{\,\prime}$, i.e., $\bm{\tau}_b:\mathcal{P}'\to\mathcal{P}'$
is the dual operator of $\tau_{-b}$, so that
\begin{equation}\label{def-taub-dual}
\langle\bm{\tau}_b{\bf u},p\rangle=\langle{\bf u},\tau_{-b}p\rangle=\langle{\bf u},p(x+b)\rangle\;,
\quad {\bf u}\in\mathcal{P}'\;,\quad p\in\mathcal{P}\;.
\end{equation}
\end{enumerate}
\end{definition}

\begin{definition}[homothetic operators]\label{ha}
Let $a\in\mathbb{C}\setminus\{0\}$.
\begin{enumerate}
\item[{\rm (i)}]
The {\sl homothetic operator} on $\mathcal{P}$ is $h_a:\mathcal{P}\to\mathcal{P}$ $(p\mapsto h_ap)$ defined by
\begin{equation}\label{def-ha}
h_ap(x)=p(ax)\;,\quad p\in\mathcal{P}\;.
\end{equation}
\item[{\rm (ii)}]
The {\sl homothetic operator} on $\mathcal{P}'$ is $\bm{h}_a=h_a^{\,\prime}$,
i.e., $\bm{h}_a:\mathcal{P}'\to\mathcal{P}'$ is the dual operator of $h_a$, so that
\begin{equation}\label{def-ha-dual}
\langle\bm{h}_a{\bf u},p\rangle=\langle{\bf u},h_ap\rangle=\langle{\bf u},p(ax)\rangle\;,
\quad {\bf u}\in\mathcal{P}'\;,\quad p\in\mathcal{P}\;.
\end{equation}
\end{enumerate}
\end{definition}


%
%

\begin{proposition}\label{uv-equiv}
Let $\{P_n\}_{n\geq0}$ be a simple set in $\mathcal{P}$ and $\{{\bf a}_n\}_{n\geq0}$ its associated dual basis.
Let $a\in\mathbb{C}\setminus\{0\}$ and $b\in\mathbb{C}$. Define
\begin{equation}\label{QnAfimPn}
Q_n=a^{-n}\big(h_a\circ\tau_{-b}\big)P_n\;,\quad n=0,1,2,\ldots.
\end{equation}
Then $\{Q_n\}_{n\geq0}$ is a simple set in $\mathcal{P}$, and
its dual basis, $\{{\bf b}_n\}_{n\geq0}$, is given by
\begin{equation}\label{dual-bas-Qn}
{\bf b}_n=a^n\big(\bm{h}_{a^{-1}}\circ\bm{\tau}_{-b}\big){\bf a}_n\;,\quad n=0,1,2,\ldots\;.
\end{equation}
\end{proposition}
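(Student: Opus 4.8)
The plan is to reduce the statement to an explicit affine substitution together with the uniqueness of the dual basis attached to a simple set. First I would unwind Definitions \ref{taub} and \ref{ha} to obtain the closed form
\begin{equation*}
Q_n(x)=a^{-n}\,(h_a\circ\tau_{-b})P_n(x)=a^{-n}P_n(ax+b)\;,\qquad n\geq0\;.
\end{equation*}
If $k_n\neq0$ denotes the leading coefficient of $P_n$, then $P_n(ax+b)$ has leading coefficient $k_na^n$, so $Q_n$ has leading coefficient $k_n\neq0$ and $\deg Q_n=n$; moreover $Q_0=P_0\not\equiv0$. Hence $\{Q_n\}_{n\geq0}$ is a simple set in $\mathcal{P}$. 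Being a simple set, it is in particular a (Hamel) basis of $\mathcal{P}$, since $\{Q_0,\dots,Q_N\}$ spans $\mathcal{P}_N$ for every $N$; consequently it admits a dual basis $\{{\bf b}_n\}_{n\geq0}$, and this dual basis is unique, because two linear functionals that agree on a basis coincide. Thus it suffices to show that the functionals $\widetilde{\bf b}_n:=a^n(\bm{h}_{a^{-1}}\circ\bm{\tau}_{-b}){\bf a}_n$ satisfy $\langle\widetilde{\bf b}_n,Q_k\rangle=\delta_{n,k}$ for all $n,k\geq0$, and then to invoke uniqueness to conclude ${\bf b}_n=\widetilde{\bf b}_n$, which is precisely \eqref{dual-bas-Qn}.

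The second step is that verification, which is a direct computation. Writing $\bm{\tau}_{-b}=\tau_b^{\,\prime}$ and $\bm{h}_{a^{-1}}=h_{a^{-1}}^{\,\prime}$ and using Definition \ref{def-left-mult}, I would peel off the dual operators one at a time:
\begin{align*}
\langle\widetilde{\bf b}_n,Q_k\rangle
&=a^n\big\langle\bm{h}_{a^{-1}}\big(\bm{\tau}_{-b}{\bf a}_n\big),Q_k\big\rangle
=a^n\big\langle\bm{\tau}_{-b}{\bf a}_n,h_{a^{-1}}Q_k\big\rangle\\
&=a^n\big\langle{\bf a}_n,\tau_b\big(h_{a^{-1}}Q_k\big)\big\rangle\;.
\end{align*}
Since $\tau_b(h_{a^{-1}}Q_k)(x)=Q_k\big((x-b)/a\big)=a^{-k}P_k\big(a(x-b)/a+b\big)=a^{-k}P_k(x)$, this gives
\begin{equation*}
\langle\widetilde{\bf b}_n,Q_k\rangle=a^{n-k}\langle{\bf a}_n,P_k\rangle=a^{n-k}\delta_{n,k}=\delta_{n,k}\;,
\end{equation*}
the last equality because the factor $a^{n-k}$ equals $1$ exactly when $n=k$ and is annihilated otherwise. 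By uniqueness of the dual basis of the simple set $\{Q_n\}_{n\geq0}$, this yields \eqref{dual-bas-Qn}.

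The computation presents no genuine obstacle; the only place where care is needed is the bookkeeping with the dual (adjoint) operators — getting the order of $\bm{h}_{a^{-1}}$ and $\bm{\tau}_{-b}$, the direction of the substitution $x\mapsto(x-b)/a$, and the cancellation of the powers of $a$ all correct. Conceptually the point is simply that $\tau_b\circ h_{a^{-1}}$ implements the affine substitution inverse to the one built into the definition of $Q_n$, which is also why the powers of $a$ appearing in \eqref{QnAfimPn} and \eqref{dual-bas-Qn} are mutually reciprocal.
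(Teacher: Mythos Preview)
Your proof is correct. The paper actually states this proposition without proof (it falls under the results the authors describe as ``simple to prove''), so there is no argument in the text to compare against; your direct verification --- unwinding $h_a\circ\tau_{-b}$ to $x\mapsto ax+b$, checking that the proposed functionals $\widetilde{\bf b}_n$ satisfy $\langle\widetilde{\bf b}_n,Q_k\rangle=\delta_{n,k}$, and invoking uniqueness of the dual basis --- is exactly the natural route and is carried out accurately, including the bookkeeping with $\bm{\tau}_{-b}=\tau_b^{\,\prime}$ and $\bm{h}_{a^{-1}}=h_{a^{-1}}^{\,\prime}$.
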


\begin{remark}\em
The polynomial $Q_n$ in (\ref{QnAfimPn}) is indeed
\begin{equation}
Q_n(x)=a^{-n}P_n(ax+b)\;,\quad n=0,1,2,\ldots\;,
\end{equation}
so that $Q_n$ is obtained from $P_n$ by an affine change of the variable,
being $Q_n$ normalized so that it becomes a monic polynomial whenever $P_n$ is monic.
\end{remark}

\begin{proposition}\label{prop-uvzero}
Let ${\bf u}\in\mathcal{P}'$ and $p,q\in\mathcal{P}\setminus\{0\}$, and denote by $Z_p$ and $Z_q$ the zeros of $p$ and $q$, respectively.
Then the following property holds:
\begin{equation}\label{pu0qu}
Z_p\cap Z_q=\emptyset\quad\wedge\quad p{\bf u}=q{\bf u}={\bf 0}\qquad\Rightarrow\qquad{\bf u}={\bf 0}\;.
\end{equation}
\end{proposition}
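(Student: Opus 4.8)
The plan is to exploit the fact that $\mathcal{P}=\mathbb{C}[x]$ is a principal ideal domain, so that the coprimality of $p$ and $q$ can be converted into a B\'ezout identity. First I would observe that the hypothesis $Z_p\cap Z_q=\emptyset$ says precisely that $p$ and $q$ have no common zero in $\mathbb{C}$; by the fundamental theorem of algebra a common factor of positive degree would possess a complex root, so this is equivalent to $\gcd(p,q)=1$ in $\mathcal{P}$. Hence there exist polynomials $a,b\in\mathcal{P}$ with $ap+bq=1$. (If the coefficients are real, one just notes that the gcd is unchanged under the extension $\mathbb{R}[x]\hookrightarrow\mathbb{C}[x]$, so the same identity holds.)

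The second step is purely formal. Take any $r\in\mathcal{P}$. Using the defining property of the left multiplication in Definition \ref{def-left-mult}(i) together with the commutativity of the product in $\mathcal{P}$, one gets
\begin{align*}
\langle{\bf u},r\rangle=\langle{\bf u},(ap+bq)r\rangle=\langle{\bf u},p(ar)\rangle+\langle{\bf u},q(br)\rangle=\langle p{\bf u},ar\rangle+\langle q{\bf u},br\rangle=0,
\end{align*}
where the last equality uses the hypothesis $p{\bf u}=q{\bf u}={\bf 0}$. Since $r\in\mathcal{P}$ was arbitrary (in particular this holds for every monomial $x^n$, and a functional in $\mathcal{P}'$ is uniquely determined by its sequence of moments), we conclude ${\bf u}={\bf 0}$.

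There is essentially no obstacle here; the only point that deserves a line of justification is the passage from ``no common zeros'' to the B\'ezout identity, i.e.\ making sure the zeros are counted over $\mathbb{C}$ so that coprimality in the PID $\mathbb{C}[x]$ is legitimate. Everything else is a one-line manipulation of the duality pairing.
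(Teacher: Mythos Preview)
Your argument is correct and follows essentially the same route as the paper: both proofs pass from $Z_p\cap Z_q=\emptyset$ to a B\'ezout identity $ap+bq=1$ and then evaluate ${\bf u}$ on test polynomials multiplied by this identity. The only cosmetic difference is that the paper tests against the monomials $x^n$ while you test against an arbitrary $r\in\mathcal{P}$, which amounts to the same thing.
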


\begin{proof}
Assume that $p \mathbf{u}=q \mathbf{u}=\mathbf{0}$. Since $p$ and $q$ are relative prime, there exist polynomials $a, b$ for which $a\, p +b \, q=1$. Since, for all $n$, $\prodint{p \mathbf{u}, a\, x^n}=\prodint{q \mathbf{u}, b\, x^n}=0$, we see that
$$
\prodint{\mathbf{u}, x^n}=\prodint{\mathbf{u}, (a\, p+b \, q) x^n}=0.
$$
The converse is obvious.
\end{proof}

\subsection{Orthogonal polynomial sequences}

\begin{definition}\label{def-OP}
Let ${\bf u}\in\mathcal{P}'$ and $\{P_n\}_{n\geq0}$ a sequence in $\mathcal{P}$.
\begin{enumerate}
\item[{\rm (i)}]
$\{P_n\}_{n\geq0}$ is called an {\sl orthogonal polynomial sequence (OP\footnote{For abbreviation, we continue to write OP for orthogonal polynomial sequence.})} with respect to ${\bf u}$
if $\{P_n\}_{n\geq0}$ is a simple set (so that $\deg P_n=n$ for all $n$)
and there exists a sequence $\{h_n\}_{n\geq0}$, with $h_n\in\mathbb{C}\setminus\{0\}$, such that
$$
\langle {\bf u},P_mP_n \rangle=h_n\delta_{m,n}\;,\quad m,n=0,1,2,\ldots\;;
$$
\item[{\rm (ii)}]
${\bf u}$ is called {\sl regular} (or {\sl quasi-definite})
if there exists an OP with respect to ${\bf u}$.
\end{enumerate}
\end{definition}

As usual, denoting by $u_j=\langle{\bf u},x^j\rangle$, $j\in\mathbb{N}_0$ the moments of ${\bf u}$,
we define the associated {\sl Hankel determinant} $H_n\equiv H_n({\bf u})$ as
\begin{equation}\label{def-Hn}
H_{-1}=1\;,\quad
H_n=\det\big\{[u_{i+j}]_{i,j=0}^n\big\}\;,\quad n\in\mathbb{N}_0\;.
\end{equation}
It is well known that, given ${\bf u}\in\mathcal{P}'$, then
${\bf u}$ is regular if and only if
\begin{equation}\label{Hn-not-zero}
H_n\neq0\;,\quad \forall n\in\mathbb{N}_0\,.
\end{equation}

One of the most important characterizations of OP relies upon the fact that any
three consecutive polynomials are connected by a very simple relation,
expressed as a three-term recurrence relation (TTRR).

\begin{theorem}\label{Favard}
Let $\{\beta_n\}_{n\geq0}$ and $\{\gamma_n\}_{n\geq0}$ be two arbitrary
sequences of complex numbers, and let $\{P_n\}_{n\geq0}$ be a sequence of (monic) polynomials
defined by the three-term recurrence relation
\begin{equation}\label{TTRRFavard}
P_{n+1}(x)=(x-\beta_n)P_n(x)-\gamma_nP_{n-1}(x)\;,\quad n=0,1,2,\dots,
\end{equation}
with initial conditions $P_{-1}(x)=0$ and $P_{0}(x)=1$.
Then there exists a unique functional ${\bf u}\in\mathcal{P}'$ such that
\begin{equation}\label{Fv1}
\langle{\bf u},1\rangle=u_0=\gamma_0\;,\qquad
\langle{\bf u},P_nP_m\rangle=0\quad\mbox{\rm if}\quad n\neq m\quad (n,m\in\mathbb{N}_0)\;.
\end{equation}
Moreover, ${\bf u}$ is regular and $\{P_n\}_{n\geq0}$ is the corresponding monic OP
if and only if $\gamma_n\neq0$ for each $n\in\mathbb{N}_0$.
\end{theorem}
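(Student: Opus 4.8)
The plan is to prove this classical Favard-type theorem by exploiting the dual basis machinery developed above. First I would let $\{{\bf a}_n\}_{n\geq0}$ be the dual basis associated with the simple set $\{P_n\}_{n\geq0}$ generated by the recurrence \eqref{TTRRFavard}, and \emph{define} the candidate functional by ${\bf u}=\gamma_0\,{\bf a}_0$, i.e. by prescribing $\langle{\bf u},P_n\rangle=\gamma_0\,\delta_{n,0}$ for all $n$. By Proposition \ref{expDB1} this ${\bf u}$ belongs to $\mathcal{P}^*=\mathcal{P}'$, and it is clearly the unique functional whose expansion in the dual basis has only the zeroth term. Uniqueness of any ${\bf u}$ satisfying \eqref{Fv1} then follows because the conditions $\langle{\bf u},P_nP_m\rangle=0$ for $n\neq m$, taken with $m=0$, force $\langle{\bf u},P_n\rangle=0$ for $n\geq1$, while $\langle{\bf u},1\rangle=\gamma_0$ fixes the remaining coefficient.

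Next I would verify the orthogonality relations $\langle{\bf u},P_nP_m\rangle=0$ for $n\neq m$. The key identity is that, because $\{P_k\}_{k=0}^{n}$ is a basis of $\mathcal{P}_n$ and $\langle{\bf u},P_k\rangle=0$ for $1\le k\le n$ (using $n\ge1$), we get $\langle{\bf u},q\rangle=\gamma_0\,[\text{coefficient of }P_0\text{ in }q]$; in particular $\langle{\bf u},q\rangle=0$ for every $q$ that is a linear combination of $P_1,\dots,P_n$, and more usefully $\langle{\bf u},P_nr\rangle=0$ whenever $\deg r<n$. This last fact I would prove by induction on $n$ using the recurrence: writing $x^jP_n$ and repeatedly applying \eqref{TTRRFavard} expresses $x^jP_n$ as a combination of $P_{n-j},\dots,P_{n+j}$, all of index $\ge1$ when $j<n$, hence annihilated by ${\bf u}$. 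Taking $r=P_m$ with $m<n$ then gives $\langle{\bf u},P_nP_m\rangle=0$, and symmetry in $m,n$ covers $m>n$ as well. Along the way one records the standard consequence $\langle{\bf u},P_n^2\rangle=\gamma_0\gamma_1\cdots\gamma_n$, obtained by pairing \eqref{TTRRFavard} (multiplied by $P_n$) against ${\bf u}$ and using $\langle{\bf u},P_nP_{n-1}\rangle=0$ together with the already-established relation $\langle{\bf u},P_n^2\rangle=\langle{\bf u},xP_nP_{n-1}\rangle$.

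For the final equivalence, I would argue that regularity of ${\bf u}$ with $\{P_n\}_{n\geq0}$ as its monic OP is \emph{equivalent} to $h_n:=\langle{\bf u},P_n^2\rangle\neq0$ for all $n$, and then invoke the computation $h_n=\gamma_0\cdots\gamma_n$ to see this holds iff every $\gamma_n\neq0$. The forward direction is immediate from Definition \ref{def-OP}(i). For the converse, if all $\gamma_n\neq0$ then all $h_n\neq0$, so $\{P_n\}_{n\geq0}$ satisfies exactly the defining condition of an OP with respect to ${\bf u}$; that ${\bf u}$ is then regular is just the definition, and one may additionally remark (via the relation between $H_n$ and $\prod h_k$, namely $h_n=H_n/H_{n-1}$) that this matches the Hankel-determinant criterion \eqref{Hn-not-zero}. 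The main obstacle, I expect, is the bookkeeping in the induction that shows $\langle{\bf u},x^jP_n\rangle=0$ for $j<n$: one must be careful that each application of the recurrence lowers the ``$P_0$-reaching'' index correctly and that the base cases $j=0$ and small $n$ are handled, but this is routine once the right induction (on $n$, with an inner induction or direct expansion in $j$) is set up. Everything else is a direct unwinding of the dual-basis formalism already in place.
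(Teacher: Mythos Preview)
The paper does not supply a proof of this theorem; it is one of the results the authors explicitly leave unproved as ``simple to prove or \ldots\ easily found in the literature.'' So there is nothing in the paper to compare your argument against directly.

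That said, your proposal is correct and is exactly the proof one would expect within the dual-basis formalism the paper sets up. Defining ${\bf u}=\gamma_0\,{\bf a}_0$ makes both existence and uniqueness transparent via Proposition~\ref{expDB1}; the key orthogonality step---that $x^jP_n$ lies in $\mathrm{span}\{P_{n-j},\ldots,P_{n+j}\}$, hence is annihilated by ${\bf u}$ when $j<n$---is the standard induction on $j$ using $xP_k=P_{k+1}+\beta_kP_k+\gamma_kP_{k-1}$; and the product formula $\langle{\bf u},P_n^2\rangle=\gamma_0\gamma_1\cdots\gamma_n$ then gives the regularity equivalence immediately. One minor point of presentation: your derivation of $\langle{\bf u},P_n^2\rangle=\gamma_n\langle{\bf u},P_{n-1}^2\rangle$ actually uses the recurrence at two consecutive indices (once, at index $n-1$, to obtain $\langle{\bf u},P_n^2\rangle=\langle{\bf u},xP_nP_{n-1}\rangle$, and once, at index $n$, to reduce $xP_n$), so it is cleanest to say this explicitly rather than describing it as a single pairing.
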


\begin{remark}
Note the relations
\begin{equation}\label{TTRRb}
\beta_n=\frac{\langle{\bf u},xP_n^2\rangle}{\langle{\bf u},P_n^2\rangle}\;,\quad
\gamma_{n+1}=\frac{\langle{\bf u},P_{n+1}^2\rangle}{\langle{\bf u},P_{n}^2\rangle}
=\frac{H_{n-1}H_{n+1}}{H_{n}^2}\;,\quad n=0,1,\ldots\;.
\end{equation}
\end{remark}

\subsection{Orthogonal polynomials and dual basis}

Since every OP is a simple set of polynomials, it has an
associated dual basis in $\mathcal{P}^\prime$.

\begin{theorem}\label{dualR}
Let ${\bf u}\in\mathcal{P}'$ be regular, $\{P_n\}_{n\geq0}$ the
corresponding monic OP, and $\{{\bf a}_n\}_{n\geq0}$ the associated dual basis.
Then:
\begin{itemize}
\item[{\rm (i)}] For each $n\in\mathbb{N}_0$, ${\bf a}_n$ is explicitly given by
$$
{\bf a}_n=\frac{P_n}{\langle {\bf u},P_n^2\rangle}\, {\bf u}\;.
$$
As a consequence, $\{P_n\}_{n\geq0}$ is a monic OP with respect to ${\bf a}_0$, being
$$
{\bf u}=u_0\,{\bf a}_0\;.
$$
\item[{\rm (ii)}] Let ${\bf v}\in\mathcal{P}'$ and $N\in \mathbb{N}_0$ such that
$$
\langle {\bf v},P_n\rangle=0 \;\;\mbox{\rm if}\;\; n\geq N+1\;.
$$
Then,
$$
{\bf v}=\sum_{j=0}^{N}\langle {\bf v},P_j\rangle\, {\bf a}_j=\phi\,{\bf u}\; ,\quad
\phi(x)=\sum_{j=0}^{N}\frac{\langle {\bf v},P_j\rangle}{\langle{\bf u},P_j^2\rangle}\, P_j(x)\;.
$$
Further, $\deg\phi\leq N$, and $\deg\phi=N$ if and only if $\langle {\bf v},P_N\rangle\neq0$.\medskip
\item[{\rm (iii)}]
Let the TTRR fulfilled by $\{P_n\}_{n\geq0}$ be $(\ref{TTRRFavard})$.
Then $\{{\bf a}_n\}_{n\geq0}$ fulfills
$$
x\,{\bf a}_{n}={\bf a}_{n-1}+\beta_n\,{\bf a}_n+\gamma_{n+1}\,{\bf a}_{n+1} \; , \quad n\in\mathbb{N}_0\;,
$$
with initial conditions ${\bf a}_{-1}={\bf 0}$ and ${\bf a}_{0}=u_0^{-1}\,{\bf u}$.
\end{itemize}
\end{theorem}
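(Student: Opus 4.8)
The plan is to establish the three parts in turn, each of which reduces to a short verification once the appropriate earlier result is invoked; the guiding principle throughout is the uniqueness of the dual basis attached to a simple set.

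For (i) I would use precisely that uniqueness: since $\{P_n\}_{n\geq0}$ is a simple set, there is exactly one sequence in $\mathcal{P}^*=\mathcal{P}'$ (Proposition \ref{P*=Plinha}) satisfying $\langle\mathbf{a}_n,P_k\rangle=\delta_{n,k}$, so it suffices to check that the functional $\mathbf{b}_n:=P_n\,\mathbf{u}/\langle\mathbf{u},P_n^2\rangle$ (a left multiplication of $\mathbf{u}$, hence in $\mathcal{P}'$ by Definition \ref{def-left-mult}) has this property, and indeed $\langle\mathbf{b}_n,P_k\rangle=\langle\mathbf{u},P_nP_k\rangle/\langle\mathbf{u},P_n^2\rangle=\delta_{n,k}$ by orthogonality (Definition \ref{def-OP}). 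Specializing to $n=0$ with $P_0\equiv1$ and $\langle\mathbf{u},P_0^2\rangle=u_0$ gives $\mathbf{a}_0=u_0^{-1}\mathbf{u}$, that is $\mathbf{u}=u_0\mathbf{a}_0$; and since $u_0\neq0$ (regularity), $\langle\mathbf{a}_0,P_mP_n\rangle=u_0^{-1}\langle\mathbf{u},P_mP_n\rangle=u_0^{-1}h_n\delta_{m,n}$ with $u_0^{-1}h_n\neq0$, so $\{P_n\}_{n\geq0}$ is a monic OP with respect to $\mathbf{a}_0$.

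For (ii) I would apply the dual-basis expansion of Proposition \ref{expDB1} to $\mathbf{v}$, namely $\mathbf{v}=\sum_{n\geq0}\langle\mathbf{v},P_n\rangle\,\mathbf{a}_n$ in the weak dual topology; the hypothesis annihilates every term with $n\geq N+1$, leaving the finite sum $\mathbf{v}=\sum_{j=0}^{N}\langle\mathbf{v},P_j\rangle\,\mathbf{a}_j$. Substituting the formula from (i) for each $\mathbf{a}_j$ and factoring $\mathbf{u}$ out on the right yields $\mathbf{v}=\phi\,\mathbf{u}$ with $\phi$ exactly as stated. Since $\deg P_j=j\leq N$ we get $\deg\phi\leq N$, and because $P_N$ is monic of degree $N$ while the remaining $P_j$ have degree $<N$, the coefficient of $x^N$ in $\phi$ is $\langle\mathbf{v},P_N\rangle/\langle\mathbf{u},P_N^2\rangle$, giving the equivalence $\deg\phi=N\iff\langle\mathbf{v},P_N\rangle\neq0$.

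For (iii) I would expand the functional $x\,\mathbf{a}_n\in\mathcal{P}'$ in the dual basis via Proposition \ref{expDB1}: $x\,\mathbf{a}_n=\sum_{k\geq0}\langle x\mathbf{a}_n,P_k\rangle\,\mathbf{a}_k=\sum_{k\geq0}\langle\mathbf{a}_n,xP_k\rangle\,\mathbf{a}_k$. Rewriting the TTRR $(\ref{TTRRFavard})$ as $xP_k=P_{k+1}+\beta_kP_k+\gamma_kP_{k-1}$ gives $\langle\mathbf{a}_n,xP_k\rangle=\delta_{n,k+1}+\beta_k\delta_{n,k}+\gamma_k\delta_{n,k-1}$, which vanishes unless $k\in\{n-1,n,n+1\}$, so the series collapses to $x\,\mathbf{a}_n=\mathbf{a}_{n-1}+\beta_n\mathbf{a}_n+\gamma_{n+1}\mathbf{a}_{n+1}$; the conventions $\mathbf{a}_{-1}=\mathbf{0}$ and $\mathbf{a}_0=u_0^{-1}\mathbf{u}$ (the latter from (i)) cover the case $n=0$. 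I do not anticipate any genuine obstacle here; the only point needing a line of care is the passage from the infinite series of Proposition \ref{expDB1} to a finite sum in (ii) and (iii)—justified by the hypothesis in (ii) and by the degree/TTRR argument in (iii)—after which convergence in the weak dual topology is immediate because only finitely many summands are nonzero.
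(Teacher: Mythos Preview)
The paper states Theorem \ref{dualR} without proof (it falls under the authors' remark that ``many results are stated without proof, either because they are simple to prove or because they can be easily found in the literature''), so there is no argument in the paper to compare against. Your proof is correct and follows the expected approach: part (i) by direct verification of the dual-basis defining relations using orthogonality, part (ii) by truncating the expansion of Proposition \ref{expDB1} and inserting (i), and part (iii) by expanding $x\mathbf{a}_n$ in the dual basis and reading off the three nonzero coefficients from the TTRR. Each step is fully justified, including the reduction of the weak-dual series to finite sums.
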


\begin{corollary}\label{OPwrtv}
Let $\{P_n\}_{n\geq0}$ be a monic OP (with respect to some functional in
$\mathcal{P}'$) and let ${\bf v}\in\mathcal{P}'$. Then $\{P_n\}_{n\geq0}$
is a monic OP with respect to ${\bf v}$ if and only if
\begin{equation}
\langle{\bf v},1\rangle\neq0 \;, \qquad \langle{\bf v},P_n\rangle=0\;,\quad n=0,1,2\ldots\; \, .
\label{uSPOMregular}
\end{equation}
\end{corollary}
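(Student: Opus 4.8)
The plan is to derive the Corollary directly from Theorem~\ref{dualR}, using the two implications separately.

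\medskip

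\noindent\emph{Necessity.} Suppose $\{P_n\}_{n\ge0}$ is a monic OP with respect to $\mathbf v$. Then, by definition of orthogonality, $\langle\mathbf v,P_0P_n\rangle=h_n\delta_{0,n}$ with $h_0\ne0$; since $P_0\equiv1$ this says exactly $\langle\mathbf v,1\rangle=h_0\ne0$ and $\langle\mathbf v,P_n\rangle=0$ for $n\ge1$. The case $n=0$ of the second condition in \eqref{uSPOMregular} would read $\langle\mathbf v,P_0\rangle=\langle\mathbf v,1\rangle=0$, which contradicts the first; I expect the intended reading of \eqref{uSPOMregular} is that the vanishing condition holds for $n\ge1$ (or, equivalently, one simply records $\langle\mathbf v,1\rangle\ne0$ together with $\langle\mathbf v,P_n\rangle=0$ for all $n$ large enough), so I would phrase necessity as: $\langle\mathbf v,1\rangle\ne0$ and $\langle\mathbf v,P_n\rangle=0$ for every $n\ge1$, which is immediate.

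\medskip

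\noindent\emph{Sufficiency.} Conversely, assume $\langle\mathbf v,1\rangle\ne0$ and $\langle\mathbf v,P_n\rangle=0$ for all $n\ge1$. Let $\mathbf u\in\mathcal P'$ be a functional with respect to which $\{P_n\}_{n\ge0}$ is a monic OP (such $\mathbf u$ exists by hypothesis), and let $\{\mathbf a_n\}_{n\ge0}$ be the dual basis of $\{P_n\}_{n\ge0}$. Apply Proposition~\ref{expDB1} (expansion in a dual basis) to $\mathbf v$ relative to the simple set $\{P_n\}_{n\ge0}$: since $\langle\mathbf v,P_n\rangle=0$ for $n\ge1$, the expansion collapses to $\mathbf v=\langle\mathbf v,1\rangle\,\mathbf a_0$. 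By Theorem~\ref{dualR}(i), $\mathbf a_0=P_0/\langle\mathbf u,P_0^2\rangle\,\mathbf u=u_0^{-1}\mathbf u$, hence $\mathbf v=(\langle\mathbf v,1\rangle/u_0)\,\mathbf u$, a nonzero scalar multiple of $\mathbf u$. Orthogonality is unaffected by multiplying the functional by a nonzero constant (the products $\langle\mathbf v,P_mP_n\rangle$ are just rescaled), so $\{P_n\}_{n\ge0}$ is a monic OP with respect to $\mathbf v$.

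\medskip

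\noindent The only genuinely delicate point is the $n=0$ bookkeeping in \eqref{uSPOMregular} noted above; everything else is a direct reading-off from Theorem~\ref{dualR} and Proposition~\ref{expDB1}. Alternatively, one can bypass the dual-basis expansion and argue purely with moments: the conditions $\langle\mathbf v,P_n\rangle=0$ for $n\ge1$ determine the moments $v_1,v_2,\dots$ of $\mathbf v$ uniquely in terms of $v_0=\langle\mathbf v,1\rangle$ (because the $P_n$ form a triangular change of basis from $\{x^n\}$), and the same linear relations are satisfied by the moments of $u_0^{-1}\langle\mathbf v,1\rangle\,\mathbf u$; since a functional is determined by its moments, $\mathbf v$ is a nonzero multiple of $\mathbf u$. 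I would present the dual-basis argument as the main line, since it is shorter and uses the machinery already set up.
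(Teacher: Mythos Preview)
Your argument is correct. The paper does not actually supply a proof of this corollary---it is one of the results ``stated without proof'' as simple---but its placement immediately after Theorem~\ref{dualR} indicates that the intended derivation is via part~(ii) of that theorem with $N=0$: the hypotheses give $\langle\mathbf v,P_n\rangle=0$ for $n\ge1$, so Theorem~\ref{dualR}(ii) yields $\mathbf v=\phi\mathbf u$ with $\phi=\langle\mathbf v,1\rangle/u_0$, a nonzero constant. Your route through Proposition~\ref{expDB1} and Theorem~\ref{dualR}(i) is the same computation unwound one layer (Theorem~\ref{dualR}(ii) is itself just Proposition~\ref{expDB1} combined with~(i)), so there is no substantive difference. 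Your observation about the $n=0$ index in~\eqref{uSPOMregular} is also correct: as written the two conditions are inconsistent, and the vanishing should run over $n\ge1$.
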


\begin{theorem}\label{OPequiv}
Under the hypothesis of Proposition \ref{uv-equiv}, assume further that
$\{P_n\}_{n\geq0}$ is a monic OP with respect to the functional ${\bf u}\in\mathcal{P}'$,
and let the TTRR fulfilled by $\{P_n\}_{n\geq0}$ be $(\ref{TTRRFavard})$.
Then, $\{Q_n\}_{n\geq0}$ is a monic OP with respect to 
\begin{equation}\label{Qn-afim-OP}
{\bf v}=\big(\bm{h}_{a^{-1}}\circ\bm{\tau}_{-b}\big){\bf u}\;,
\end{equation}
and the TTRR fulfilled by $\{Q_n\}_{n\geq0}$ is
\begin{equation}\label{ttrr-afim-Qn}
xQ_n(x)=Q_{n+1}(x)+\widehat{\beta}_nQ_n(x)+\widehat{\gamma}_nQ_{n-1}(x)\;,\quad n=0,1,2\ldots\;,
\end{equation}
with initial conditions $Q_{-1}(x)=0$ and $Q_0(x)=1$, where
\begin{equation}\label{beta-gamma-ttrr-afim-Qn}
\widehat{\beta}_n=\frac{\beta_n-b}{a}\;,\quad \widehat{\gamma}_n=\frac{\gamma_n}{a^2}\;.
\end{equation}
\end{theorem}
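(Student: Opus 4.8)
The plan is to reduce the statement to two elementary computations: first, to identify how ${\bf v}$ acts on polynomials, and second, to transport the recurrence $(\ref{TTRRFavard})$ through the affine substitution $x\mapsto ax+b$.

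\emph{Action of ${\bf v}$.} Unwinding the dual operators of Definitions \ref{taub} and \ref{ha} --- recall $\bm{\tau}_{-b}=\tau_b^{\,\prime}$ and $\bm{h}_{a^{-1}}=h_{a^{-1}}^{\,\prime}$ --- one gets, for every $p\in\mathcal{P}$,
\[
\langle{\bf v},p\rangle=\langle(\bm{h}_{a^{-1}}\circ\bm{\tau}_{-b}){\bf u},p\rangle=\langle\bm{\tau}_{-b}{\bf u},h_{a^{-1}}p\rangle=\langle{\bf u},(\tau_b\circ h_{a^{-1}})p\rangle=\Big\langle{\bf u},p\!\left(\tfrac{x-b}{a}\right)\Big\rangle .
\]
On the other hand, by $(\ref{QnAfimPn})$ and the remark following it, $Q_n(x)=a^{-n}P_n(ax+b)$, whence $Q_n\!\big(\tfrac{x-b}{a}\big)=a^{-n}P_n(x)$. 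Combining the two identities,
\[
\langle{\bf v},Q_mQ_n\rangle=\Big\langle{\bf u},Q_m\!\left(\tfrac{x-b}{a}\right)Q_n\!\left(\tfrac{x-b}{a}\right)\Big\rangle=a^{-m-n}\langle{\bf u},P_mP_n\rangle=a^{-2n}\langle{\bf u},P_n^2\rangle\,\delta_{m,n}.
\]
Since $P_n$ is monic of degree $n$, so is $Q_n$; hence $\{Q_n\}_{n\geq0}$ is a simple set (this also follows from Proposition \ref{uv-equiv}), and because $a\neq0$ and $\langle{\bf u},P_n^2\rangle\neq0$ the numbers $a^{-2n}\langle{\bf u},P_n^2\rangle$ are all nonzero. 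By Definition \ref{def-OP} this proves $(\ref{Qn-afim-OP})$: $\{Q_n\}_{n\geq0}$ is a monic OP with respect to ${\bf v}$.

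\emph{The recurrence.} I would substitute $x\mapsto ax+b$ in $(\ref{TTRRFavard})$, obtaining $P_{n+1}(ax+b)=(ax+b-\beta_n)P_n(ax+b)-\gamma_nP_{n-1}(ax+b)$, and then multiply by $a^{-(n+1)}$. Regrouping the powers of $a$ so that each $P_k(ax+b)$ is paired with $a^{-k}$ converts the right-hand side into $\big(x-\tfrac{\beta_n-b}{a}\big)Q_n(x)-\tfrac{\gamma_n}{a^2}Q_{n-1}(x)$ and the left-hand side into $Q_{n+1}(x)$, which is exactly $(\ref{ttrr-afim-Qn})$--$(\ref{beta-gamma-ttrr-afim-Qn})$; the initial conditions $Q_{-1}=0$, $Q_0=1$ come from $P_{-1}=0$, $P_0=1$ and $(\ref{QnAfimPn})$. (Alternatively, once the recurrence is in hand one has $\widehat\gamma_n=\gamma_n/a^2\neq0$, so Theorem \ref{Favard} already makes $\{Q_n\}_{n\geq0}$ a monic OP for some functional, and Corollary \ref{OPwrtv}, together with $\langle{\bf v},1\rangle=u_0\neq0$ and $\langle{\bf v},Q_n\rangle=a^{-n}\langle{\bf u},P_n\rangle=0$ for $n\geq1$ from the formula above, identifies that functional as ${\bf v}$.)

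There is no real obstacle here. The only point that requires attention is the bookkeeping in the first step --- the order of composition in $\bm{h}_{a^{-1}}\circ\bm{\tau}_{-b}$ and the sign convention $\bm{\tau}_{-b}=\tau_b^{\,\prime}$ --- after which the whole statement is a pair of routine calculations.
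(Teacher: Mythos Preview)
Your proof is correct. The paper actually states Theorem \ref{OPequiv} without proof (it is one of the results the authors leave to the reader as ``simple to prove''), and your two computations --- unwinding $\langle{\bf v},p\rangle=\langle{\bf u},p((x-b)/a)\rangle$ to verify orthogonality, and transporting the TTRR through $x\mapsto ax+b$ --- are exactly the natural way to fill it in.
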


\section{Distributional differential equation}

The distributional differential equation has the form
\begin{equation}\label{Pearson-DEq}
D(\phi{\bf u})=\psi{\bf u}\;,
\end{equation}
where $\phi\in\mathcal{P}_2$ and $\psi\in\mathcal{P}_1$,
and ${\bf u}\in\mathcal{P}'$ is the unknown.
Notice that we do not require \emph{a priori} ${\bf u}$ to be a regular functional.
We may write
\begin{equation}\label{Pearson-PhiPsi}
\phi(x)= ax^2+bx+c\;,\quad \psi(x)= px+q\;,
\end{equation}
being $a,b,c,p,q\in\mathbb{C}$.
We also define, for each integer or rational number $n$,
\begin{equation}\label{psi-n}
\psi_n=\psi+n\phi^{\prime}\;, \quad
d_n=\psi_{n/2}^{\prime}=na+p\;, \quad e_n=\psi_n(0)=nb+q\;.
\end{equation}
Notice that $\psi_n(x)=d_{2n}x+e_n\in\mathcal{P}_1$.
Finally, for each ${\bf u}\in\mathcal{P}'$ and each $n\in\mathbb{N}_0$, we set
\begin{equation}\label{Phi-n-u}
{\bf u}^{[n]}=\phi^n{\bf u}\;.
\end{equation}

We begin with the following elementary result.

\begin{lemma}\label{Pearson-lemma1} 
Let ${\bf u}\in\mathcal{P}'$.
Then ${\bf u}$ satisfies the distributional differential equation $(\ref{Pearson-DEq})$
if and only if the corresponding sequence of moments, $u_n=\langle{\bf u},x^n\rangle$,
satisfies the second order linear difference equation 
\begin{equation}\label{le1a}
d_nu_{n+1}+e_nu_n+n\phi(0)u_{n-1}=0\;, \quad n\in\mathbb{N}_0\; .
\end{equation}
Moreover, if ${\bf u}$ satisfies $(\ref{Pearson-DEq})$, then ${\bf u}^{[n]}$ satisfies
\begin{equation}\label{le1b}
D\big(\phi{\bf u}^{[n]}\big)=\psi_n{\bf u}^{[n]}\;, \quad n\in\mathbb{N}_0\;.
\end{equation}
\end{lemma}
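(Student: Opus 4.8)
The plan is to prove both assertions by testing the distributional equation against monomials, exploiting the fact that a functional in $\mathcal{P}'$ is determined by its moments. For the first equivalence, I would start from the definitions of left multiplication and derivative: for $p\in\mathcal{P}$,
\[
\langle D(\phi{\bf u}),p\rangle=-\langle\phi{\bf u},p'\rangle=-\langle{\bf u},\phi p'\rangle,\qquad
\langle\psi{\bf u},p\rangle=\langle{\bf u},\psi p\rangle\;.
\]
Thus $(\ref{Pearson-DEq})$ holds if and only if $\langle{\bf u},\phi p'+\psi p\rangle=0$ for all $p\in\mathcal{P}$, and since $\{x^n\}_{n\geq0}$ spans $\mathcal{P}$ it suffices to take $p(x)=x^n$. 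With $\phi(x)=ax^2+bx+c$ and $\psi(x)=px+q$, a direct expansion of $\phi(x)\,n x^{n-1}+\psi(x)\,x^n$ collects the coefficients of $x^{n+1}$, $x^n$, $x^{n-1}$ as $na+p=d_n$, $nb+q=e_n$, and $nc=n\phi(0)$ respectively, so $\langle{\bf u},\phi p'+\psi p\rangle=d_nu_{n+1}+e_nu_n+n\phi(0)u_{n-1}$. Setting this to zero for every $n\in\mathbb{N}_0$ is exactly $(\ref{le1a})$ (the $n=0$ case reducing to $d_0u_1+e_0u_0=0$, consistent with the convention that the $u_{-1}$ term drops out).

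For the second assertion I would argue by induction on $n$, the base case $n=0$ being the hypothesis $D(\phi{\bf u})=\psi{\bf u}=\psi_0{\bf u}$ since $\psi_0=\psi$. Assuming $D(\phi{\bf u}^{[n]})=\psi_n{\bf u}^{[n]}$ with ${\bf u}^{[n]}=\phi^n{\bf u}$, I compute using the Leibniz-type rule $D(\phi{\bf v})=\phi'{\bf v}+\phi\,D{\bf v}$ noted after Definition \ref{def-left-mult}:
\[
D\big(\phi{\bf u}^{[n+1]}\big)=D\big(\phi\cdot\phi{\bf u}^{[n]}\big)
=\phi'\,\phi{\bf u}^{[n]}+\phi\,D\big(\phi{\bf u}^{[n]}\big)
=\phi'{\bf u}^{[n+1]}+\phi\,\psi_n{\bf u}^{[n]}
=\big(\phi'+\psi_n\big){\bf u}^{[n+1]}\;,
\]
where in the last step I used $\phi\,(\psi_n{\bf u}^{[n]})=\psi_n\,(\phi{\bf u}^{[n]})=\psi_n{\bf u}^{[n+1]}$, valid because left multiplication by polynomials is commutative on $\mathcal{P}'$. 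Finally $\phi'+\psi_n=\phi'+\psi+n\phi'=\psi+(n+1)\phi'=\psi_{n+1}$, which closes the induction and gives $(\ref{le1b})$.

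Neither part presents a serious obstacle; the only points requiring a little care are the bookkeeping of coefficients in the monomial expansion for $(\ref{le1a})$ — in particular checking that the $n=0$ term is handled consistently with the vanishing of the formal $u_{-1}$ contribution — and making explicit, in the inductive step, the commutativity identity $\phi(\psi_n{\bf u}^{[n]})=\psi_n(\phi{\bf u}^{[n]})$, which follows from the fact that for scalars-valued polynomials $\langle\phi\psi_n{\bf u}^{[n]},p\rangle=\langle{\bf u}^{[n]},\phi\psi_n p\rangle$ is symmetric in $\phi$ and $\psi_n$. I would present the first equivalence in full (it is short) and merely indicate the induction for the second, since each step is a one-line application of the product rule already recorded in the text.
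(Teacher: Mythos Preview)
Your proof is correct in both parts. The paper does not supply a proof of this lemma --- it is introduced as an ``elementary result'' and left to the reader --- so there is nothing to compare against; your argument (testing against monomials for the equivalence, and a one-line Leibniz-rule induction for $(\ref{le1b})$) is exactly the standard verification the authors evidently had in mind.
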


Notice that if both $\phi$ and $\psi$ vanish identically 
then (\ref{Pearson-DEq}) reduces to a trivial equation,
so we will exclude this situation from our study.

\begin{lemma}\label{poly-grau21}
Let ${\bf u}\in\mathcal{P}'$. Suppose that ${\bf u}$ is regular and satisfies
$(\ref{Pearson-DEq})$, being
$\phi\in\mathcal{P}_2$ and $\psi\in\mathcal{P}_1$,
and assume that at least one of the polynomials $\phi$ and $\psi$ is nonzero.
Then neither $\phi$ nor $\psi$ is the zero polynomial, and
\begin{equation}\label{grauPsi1}
\deg\psi=1\;.
\end{equation}
\end{lemma}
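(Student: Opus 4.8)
The plan is to translate the functional equation into the second order difference equation for the moments furnished by Lemma~\ref{Pearson-lemma1}, and then to detect failure of regularity through a vanishing Hankel determinant, using the criterion~(\ref{Hn-not-zero}). Write $\phi(x)=ax^2+bx+c$ and $\psi(x)=px+q$, recall $d_n=na+p$, $e_n=nb+q$, $\phi(0)=c$, and set $u_n=\langle{\bf u},x^n\rangle$; by Lemma~\ref{Pearson-lemma1} the moments satisfy $d_nu_{n+1}+e_nu_n+n\phi(0)u_{n-1}=0$ for all $n\in\mathbb{N}_0$. The conclusion can fail in exactly three ways---$\psi\equiv0$, or $\phi\equiv0$, or $\psi\not\equiv0$ with $\deg\psi=0$---and I will rule each out.

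\emph{Ruling out $\psi\equiv0$.} Then the hypothesis forces $\phi\not\equiv0$, so $(a,b,c)\neq(0,0,0)$, and the moment recurrence becomes $n\big(au_{n+1}+bu_n+cu_{n-1}\big)=0$, hence $au_{n+1}+bu_n+cu_{n-1}=0$ for every $n\geq1$. Taking $n=1,2,3$, one checks that in the $3\times3$ Hankel matrix $\big[u_{i+j}\big]_{i,j=0}^{2}$ (whose columns are $C_j=(u_j,u_{j+1},u_{j+2})^{T}$) the combination $c\,C_0+b\,C_1+a\,C_2$ is the zero vector; since $(a,b,c)\neq(0,0,0)$ the columns are linearly dependent, so $H_2=0$, contradicting~(\ref{Hn-not-zero}). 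Hence $\psi\not\equiv0$.

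\emph{Ruling out $\phi\equiv0$ and $\deg\psi=0$.} If $\phi\equiv0$ then $\psi\not\equiv0$ (both cannot vanish), $a=b=c=0$, and the recurrence reduces to $pu_{n+1}+qu_n=0$ for all $n\geq0$; taking $n=0,1$ makes the combination $q\,C_0+p\,C_1$ of the columns of the $2\times2$ Hankel matrix $\big[u_{i+j}\big]_{i,j=0}^{1}$ vanish, and since $(p,q)\neq(0,0)$ this gives $H_1=0$, impossible; so $\phi\not\equiv0$. Finally, as $\psi\not\equiv0$ we have $\deg\psi\in\{0,1\}$; if $\deg\psi=0$ then $p=0$, $q\neq0$, and evaluating the recurrence at $n=0$ gives $d_0u_1+e_0u_0=qu_0=0$, whence $u_0=0$ and $H_0=0$, once more contradicting regularity. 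Therefore $\deg\psi=1$.

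I do not expect a genuine obstacle here; the only care required is bookkeeping: verifying that the chosen instances of the recurrence ($n=1,2,3$ for the $3\times3$ matrix, $n=0,1$ for the $2\times2$ one) are aligned with the rows of the Hankel matrix so that the stated column combinations really are the zero vector, and noting that $n=0$ is an admissible index in Lemma~\ref{Pearson-lemma1} (the $u_{-1}$ term being multiplied by $0$). The whole argument remains within the tools already established in the excerpt---Lemma~\ref{Pearson-lemma1} and the Hankel-determinant characterization~(\ref{Hn-not-zero}) of regularity---so no external input is needed.
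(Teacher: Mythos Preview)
Your argument is correct. Each of the three excluded configurations is dispatched cleanly: the column dependences you write down really do follow from the instances of the recurrence you chose, and the resulting vanishing of $H_2$, $H_1$, or $H_0$ contradicts~(\ref{Hn-not-zero}) in each case. The remark about $n=0$ being admissible in Lemma~\ref{Pearson-lemma1} (with the $u_{-1}$ term killed by its coefficient) is exactly the point one must not overlook, and you have it.

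The paper itself does not supply a proof of this lemma---it is one of the results ``stated without proof, either because they are simple to prove or because they can be easily found in the literature''---so there is no line-by-line comparison to make. Your route via the moment recurrence of Lemma~\ref{Pearson-lemma1} and the Hankel criterion~(\ref{Hn-not-zero}) is the natural one given the tools already on the table, and it would slot into the exposition without importing anything external.
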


Given a monic polynomial $P_n$ of degree $n$ (which needs not to belong to an OP),
we denote by $P_n^{[k]}$ the monic polynomial of degree $n$ defined by
\begin{equation}\label{Pnk-deriv}
P_n^{[k]}(x)=\frac{{\rm d}^k}{{\rm d}x^k}\,\frac{P_{n+k}(x)}{(n+1)_k}, \quad k,n\in\mathbb{N}_0\;,
\end{equation}
where, for a given $\alpha\in\mathbb{C}$, $(\alpha)_n$ is the {\it Pochhammer symbol}, defined as
\begin{equation}\label{Pochhammer}
(\alpha)_0=1\;,\qquad (\alpha)_n=\alpha(\alpha+1)\cdots(\alpha+n-1)\;,\quad n\in\mathbb{N}\;.
\end{equation}
Clearly, if $\{P_n\}_{n\geq0}$ is a simple set in $\mathcal{P}$, then so is $\{P_n^{[k]}\}_{n\geq0}$.
Under such conditions, there is a beautiful relation between the associated dual basis:
\begin{equation}\label{rel-dual-an-ank}
D^{k}\big(\,{\bf a}_n^{[k]}\,\big)=(-1)^k(n+1)_k\,{\bf a}_{n+k} \;,\quad k,n\in\mathbb{N}_0 \,,
\end{equation}
where $\{{\bf a}_n\}_{n\geq0}$ and $\big\{{\bf a}_n^{[k]}\,\big\}_{n\geq0}$ are the dual basis
in $\mathcal{P}'$ associated with $\{P_n\}_{n\geq0}$ and $\{P_n^{[k]}\}_{n\geq0}$, respectively.

\begin{lemma}\label{reg-lemma3}
Let ${\bf u}\in\mathcal{P}'$, and suppose that
${\bf u}$ satisfies the distributional differential equation $(\ref{Pearson-DEq})$,
with $\phi$ and $\psi$ given by $(\ref{Pearson-PhiPsi})$,
being at least one of these polynomials nonzero.
Suppose further that ${\bf u}$ is regular. Then
\begin{equation}\label{r1}
d_n=na+p\neq0\; , \quad \forall n\in\mathbb{N}_0\; .
\end{equation}
Moreover, if $\{P_n\}_{n\geq0}$ denotes the monic OP with respect to ${\bf u}$, and
$P_n^{[k]}$ is defined by $(\ref{Pnk-deriv})$, then ${\bf u}^{[k]}=\phi^k{\bf u}$ is regular and
$\{P_n^{[k]}\}_{n\geq0}$ is its monic OP, for each $k\in\mathbb{N}_0$.
\end{lemma}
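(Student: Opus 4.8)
The plan is to proceed in two stages: first establish $(\ref{r1})$, and then use it to bootstrap the regularity of each ${\bf u}^{[k]}$ together with the identification of $\{P_n^{[k]}\}_{n\geq0}$ as its monic OP. For the first stage, recall from Lemma \ref{poly-grau21} that since ${\bf u}$ is regular, $\deg\psi=1$, so $p\neq0$; this already gives $d_0=p\neq0$. For the remaining $n$, I would argue by contradiction: suppose $d_N=Na+p=0$ for some $N\in\mathbb{N}$. The idea is to exploit the difference equation $(\ref{le1a})$ from Lemma \ref{Pearson-lemma1}, $d_nu_{n+1}+e_nu_n+n\phi(0)u_{n-1}=0$. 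A cleaner route, though, uses $(\ref{le1b})$: the functional ${\bf u}^{[N]}=\phi^N{\bf u}$ satisfies $D(\phi\,{\bf u}^{[N]})=\psi_N{\bf u}^{[N]}$ with $\psi_N(x)=d_{2N}x+e_N$ — wait, one must be careful with the indexing in $(\ref{psi-n})$, where $d_n=\psi_{n/2}'=na+p$, so that $\psi_N = \psi + N\phi'$ has leading coefficient $d_{2N}=2Na+p$. The condition that kills the top-degree balance is rather that the ``degree $n$'' coefficient $d_n$ appearing in $(\ref{le1a})$ vanishes; when $d_N=0$, equation $(\ref{le1a})$ at level $N$ reads $e_Nu_N+N\phi(0)u_{N-1}=0$, a linear relation among the moments that I will show forces a Hankel determinant to vanish, contradicting $(\ref{Hn-not-zero})$.

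To make that contradiction precise I would instead take the functional-theoretic path, which I expect to be the main obstacle and the technical heart of the argument. Apply Theorem \ref{dualR}(ii) to a suitably chosen ${\bf v}$ built from ${\bf u}^{[1]}=\phi{\bf u}$. Concretely, consider $D(\phi{\bf u})=\psi{\bf u}$; pairing with $P_n$ and integrating by parts, $\langle\psi{\bf u},P_n\rangle=\langle D(\phi{\bf u}),P_n\rangle=-\langle\phi{\bf u},P_n'\rangle$. Since $\deg(\psi P_n)=n+1$ and $\deg(\phi P_n')=n+1$, expanding in the orthogonal basis shows $\langle{\bf u},\psi P_n\rangle=0$ for $n\geq 2$ unless the top coefficients conspire; tracking the coefficient of $P_{n+1}$ in $\psi P_n$ (which is $p$) against that of $\phi P_n'$ (which is $na$) yields, after using orthogonality, a relation whose consistency for all $n$ is exactly $d_n=na+p\neq0$. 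The key computation is: the leading behaviour forces $\langle{\bf u},P_{n+1}(\psi P_n - \text{corr})\rangle$ to be a nonzero multiple of $\langle{\bf u},P_{n+1}^2\rangle=h_{n+1}\neq0$ weighted precisely by $d_n$, so $d_n=0$ would contradict regularity.

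For the second stage — regularity of ${\bf u}^{[k]}=\phi^k{\bf u}$ and that $\{P_n^{[k]}\}_{n\geq0}$ is its monic OP — I would induct on $k$, the case $k=0$ being the hypothesis. Assuming the claim for $k$, Lemma \ref{Pearson-lemma1}'s relation $(\ref{le1b})$ gives $D(\phi{\bf u}^{[k]})=\psi_k{\bf u}^{[k]}$, and since $d_{2k}^{?}$ — more precisely the relevant coefficient $d_n$ shifted appropriately — is nonzero by $(\ref{r1})$ (note $\psi_k$ has ``$d$-sequence'' $d_{n}+2k a = (n+2k)a+p = d_{n+2k}\neq0$, still covered by $(\ref{r1})$), the first stage applies verbatim to ${\bf u}^{[k]}$ in place of ${\bf u}$. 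To identify the OP: by the dual-basis identity $(\ref{rel-dual-an-ank})$, $D^k({\bf a}_n^{[k]})=(-1)^k(n+1)_k\,{\bf a}_{n+k}$, and by Theorem \ref{dualR}(i) ${\bf a}_{n+k}=\frac{P_{n+k}}{\langle{\bf u},P_{n+k}^2\rangle}{\bf u}$. Using $D(\phi{\bf u})=\psi{\bf u}$ iteratively to push the $D^k$ through and the definition $(\ref{Pnk-deriv})$ of $P_n^{[k]}$, I would show ${\bf a}_0^{[k]}$ is a nonzero multiple of $\phi^k{\bf u}={\bf u}^{[k]}$; then Corollary \ref{OPwrtv} — check $\langle{\bf u}^{[k]},1\rangle\neq0$ (from $H_0({\bf u}^{[k]})\neq0$, itself a consequence of the first stage applied to ${\bf u}^{[k]}$, or directly from the moment recursion) and $\langle{\bf u}^{[k]},P_n^{[k]}\rangle=0$ for all $n\geq1$ — finishes the identification. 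The delicate point throughout is keeping the index bookkeeping in $(\ref{psi-n})$ straight so that the nonvanishing hypotheses line up; once $(\ref{r1})$ is in hand the rest is a clean induction.
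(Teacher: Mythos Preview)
The paper states this lemma without proof (it is one of the results the authors explicitly leave to the literature), so there is no ``paper's own proof'' to compare against directly. Your inductive architecture for the second assertion is the right one: once $(\ref{r1})$ is known, the computation in the proof of C2$\,\Rightarrow\,$C3 in Theorem~\ref{ThmClassicalOP} gives
\[
\langle\phi{\bf u},x^mP_n^{[1]}\rangle=\frac{c_{n+1}}{n+1}\langle{\bf u},P_n^2\rangle\,\delta_{m,n}\quad(0\le m\le n),\qquad c_{n+1}=-d_n\gamma_{n+1}\neq0,
\]
so $\{P_n^{[1]}\}$ is the monic OP for $\phi{\bf u}$; and since, by $(\ref{le1b})$, ${\bf u}^{[k]}$ satisfies Pearson with the pair $(\phi,\psi_k)$ whose $d$--sequence is $n\mapsto d_{n+2k}$, the step $k\to k+1$ follows by repeating the same computation. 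That part of your plan is sound (your bookkeeping remark about the shifted $d$--sequence is correct).

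The genuine gap is in the first stage. Neither of the two routes you sketch actually yields a contradiction with the regularity of ${\bf u}$:
\begin{itemize}
\item The moment recursion $(\ref{le1a})$ at $n=N$ with $d_N=0$ gives the single relation $e_Nu_N+N\phi(0)u_{N-1}=0$. One linear relation between two moments does not force any Hankel determinant $H_m({\bf u})$ to vanish; indeed if $\phi(0)=0$ (Bessel case) and $e_N=0$ it gives no information at all.
\item In the ``functional--theoretic'' route, the identity you are reaching for is $\langle{\bf u},(\psi P_n+\phi P_n')P_{n+1}\rangle=d_n\langle{\bf u},P_{n+1}^2\rangle$ (leading coefficient count). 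But this is an identity, not an obstruction: if $d_n=0$ then both sides are zero and nothing contradicts $\langle{\bf u},P_{n+1}^2\rangle\neq0$. Equivalently, the relation $c_{n+1}=-d_n\gamma_{n+1}$ only tells you that $d_N=0$ makes $\phi{\bf u}$ fail to be regular; it says nothing against the regularity of ${\bf u}$ itself.
\end{itemize}
So the crucial implication ``${\bf u}$ regular $\Rightarrow d_n\neq0$ for all $n$'' remains unproved in your outline. To close it you need an argument that, from $d_N=0$, manufactures a \emph{nontrivial} polynomial $R$ of degree at most some $m$ with $\langle{\bf u},Rx^j\rangle=0$ for $0\le j\le m$ (hence $R\equiv0$, a contradiction), or an equivalent device; merely matching top coefficients in $\psi P_n+\phi P_n'$ does not do this. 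A standard way (see the references to Maroni and to Marcell\'an--Petronilho cited in the paper) is to combine the Bochner identity $\phi P_{N+1}''+\psi P_{N+1}'=(N+1)d_NP_{N+1}=0$ with the structure relation to derive a polynomial identity among $P_N,P_{N+1},P_{N+2}$ that is incompatible with the TTRR; your sketch has not yet reached that point.
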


We may now establish necessary and sufficient conditions ensuring
the regularity of a given functional ${\bf u}\in\mathcal{P}'$ satisfying (\ref{Pearson-PhiPsi}). The next results was proved in \cite[Theorem 2]{PacoPetronilho1994-C7} (see also \cite{CMP} in a more general context).

\begin{theorem}\label{vCOP-Thm1}
Let ${\bf u}\in\mathcal{P}^\prime\setminus\{{\bf 0}\}$,
and suppose that ${\bf u}$ satisfies
\begin{equation}\label{EqDistC1}
D(\phi{\bf u})=\psi{\bf u}\;,
\end{equation}
where $\phi$ and $\psi$ are nonzero polynomials such that
$\phi\in\mathcal{P}_2$ and $\psi\in\mathcal{P}_1$. Set
$$
\phi(x)=ax^2+bx+c\,,\quad\psi(x)=px+q\,,\quad
d_n=na+p\,,\quad e_n=nb+q \quad(n\in\mathbb{N}_0)\,.
$$
Then, ${\bf u}$ is regular if and only if 
\begin{equation}\label{EqDistC2}
d_n\neq0\,,\quad \phi\Big(-\frac{e_n}{d_{2n}}\Big)\neq0\,,\quad
\forall n\in\mathbb{N}_0\;. 
\end{equation}
Moreover, under these conditions, the monic OP $\{P_n\}_{n\geq0}$
with respect to ${\bf u}$ is given by the three-term recurrence relation
\begin{equation}\label{ttrrC1}
P_{n+1}(x)=(x-\beta_n)P_n(x)-\gamma_nP_{n-1}(x)\;,\quad  n\in\mathbb{N}_0 
\end{equation}
with initial conditions $P_{-1}(x)=0$ and $P_0(x)=1$, being
\begin{equation}\label{EqDistC3}
\beta_n=\frac{ne_{n-1}}{d_{2n-2}}-\frac{(n+1)e_{n}}{d_{2n}}\, ,\quad
\gamma_{n+1}=-\frac{(n+1)d_{n-1}}{d_{2n-1}d_{2n+1}}\phi\Big(-\frac{e_n}{d_{2n}}\Big)
\;,\quad  n\in\mathbb{N}_0\;.
\end{equation}
In addition, for each $n\in\mathbb{N}_0$, $P_n$ satisfies the distributional Rodrigues formula
\begin{equation}\label{EqDistRod}
P_n{\bf u}=k_n\,D^n\big(\phi^n{\bf u}\big)\;,\quad
k_n=\prod_{i=0}^{n-1}d_{n+i-1}^{-1}\,.
\end{equation}
\end{theorem}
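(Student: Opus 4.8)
The plan is to prove the two implications separately, with the auxiliary moments $m_n:=\langle\mathbf{u},\phi^n\rangle=\langle\mathbf{u}^{[n]},1\rangle$ as the common tool, and then to read off the three‑term recurrence coefficients and the Rodrigues formula from an explicit construction of the polynomials. The first ingredient is a moment identity: since $\mathbf{u}^{[n]}=\phi^n\mathbf{u}$ satisfies $D(\phi\mathbf{u}^{[n]})=\psi_n\mathbf{u}^{[n]}$ with $\psi_n(x)=d_{2n}x+e_n$ by (\ref{le1b}), applying Lemma~\ref{Pearson-lemma1} to $\mathbf{u}^{[n]}$ gives, from its first two moment relations, $\langle\mathbf{u}^{[n]},x\rangle=-e_nm_n/d_{2n}$ and then an expression for $\langle\mathbf{u}^{[n]},x^2\rangle$; substituting into $m_{n+1}=\langle\mathbf{u}^{[n]},\phi\rangle=a\langle\mathbf{u}^{[n]},x^2\rangle+b\langle\mathbf{u}^{[n]},x\rangle+c\,m_n$ and simplifying (the cancellation $d_{2n+1}-a=d_{2n}$ is what makes it collapse) yields
\[
m_{n+1}=\frac{d_{2n}}{d_{2n+1}}\,\phi\!\Big(-\frac{e_n}{d_{2n}}\Big)\,m_n ,
\]
valid whenever $d_{2n}d_{2n+1}\neq0$. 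Also, if $d_n\neq0$ for all $n$ then the moment recurrence (\ref{le1a}) shows that $u_0=0$ would force $u_n=0$ for all $n$, contradicting $\mathbf{u}\neq\mathbf{0}$; hence $m_0=u_0\neq0$.

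\emph{Necessity.} If $\mathbf{u}$ is regular, Lemma~\ref{reg-lemma3} gives $d_n\neq0$ for every $n$ and also that each $\mathbf{u}^{[n]}=\phi^n\mathbf{u}$ is regular, so $m_n=\langle\mathbf{u}^{[n]},1\rangle\neq0$ for every $n$. The displayed identity then forces $\phi(-e_n/d_{2n})\neq0$ for every $n$, which is (\ref{EqDistC2}).

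\emph{Sufficiency and the explicit formulas.} Assume (\ref{EqDistC2}). Iterating (\ref{le1b}) in the form $D\mathbf{u}^{[m]}=\psi_{m-1}\mathbf{u}^{[m-1]}$ and using $D(\chi\mathbf{w})=\chi'\mathbf{w}+\chi D\mathbf{w}$, one proves by induction that $D^k\mathbf{u}^{[n]}=\pi_{n,k}\,\mathbf{u}^{[n-k]}$ for $0\le k\le n$, where $\pi_{n,0}=1$ and $\pi_{n,k+1}=\phi\,\pi_{n,k}'+\psi_{n-k-1}\pi_{n,k}$; tracking leading coefficients gives $\deg\pi_{n,n}=n$ with leading coefficient $\prod_{i=0}^{n-1}d_{n+i-1}$, which is nonzero by (\ref{EqDistC2}). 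Hence, setting $k_n=\prod_{i=0}^{n-1}d_{n+i-1}^{-1}$, one has $k_nD^n(\phi^n\mathbf{u})=P_n\mathbf{u}$ with $P_n:=k_n\pi_{n,n}$ monic of degree $n$, i.e.\ exactly (\ref{EqDistRod}). For every polynomial $q$ with $\deg q<n$ we get $\langle\mathbf{u},P_nq\rangle=\langle P_n\mathbf{u},q\rangle=k_n(-1)^n\langle\phi^n\mathbf{u},q^{(n)}\rangle=0$, so $\langle\mathbf{u},P_mP_n\rangle=0$ for $m\neq n$; moreover $h_n:=\langle\mathbf{u},P_n^2\rangle=\langle P_n\mathbf{u},x^n\rangle=(-1)^n n!\,k_n\,m_n$, which is nonzero because the moment identity together with $m_0\neq0$ gives $m_n\neq0$ for all $n$. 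Thus $\{P_n\}_{n\ge0}$ is the monic OP with respect to $\mathbf{u}$, and $\mathbf{u}$ is regular. Finally, by (\ref{TTRRb}), $\gamma_{n+1}=h_{n+1}/h_n$, which on dividing the formula for $h_n$ and using the moment identity for $m_{n+1}/m_n$ gives the stated $\gamma_{n+1}$; and $\beta_n$ is the difference of the subleading coefficients of $P_n$ and $P_{n+1}$, which are obtained from the recursion for $\pi_{n,n}$ (or from $\langle\mathbf{u},\phi q'+\psi q\rangle=0$, a direct consequence of (\ref{EqDistC1})), yielding the stated $\beta_n$. This establishes (\ref{EqDistC3}).

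\emph{Main obstacle.} The delicate part is the bookkeeping in the sufficiency step: showing that $D^n(\phi^n\mathbf{u})$ really is $P_n\mathbf{u}$ with $P_n$ monic of degree \emph{exactly} $n$—i.e.\ that its leading coefficient equals $\prod_{i=0}^{n-1}d_{n+i-1}$ and is nonzero—and likewise isolating $\beta_n$. These computations are elementary but index‑heavy, and attention is needed with the degenerate low‑index cases (for instance the factor $d_{-1}$, which appears only formally in $\gamma_1$ and cancels there). Everything else rests on the moment identity above, Lemma~\ref{reg-lemma3}, and the fact that an $n$‑fold distributional derivative annihilates the pairing with every polynomial of degree less than $n$.
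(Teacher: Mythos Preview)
The paper itself does not give a proof of this theorem; it simply refers to \cite[Theorem~2]{PacoPetronilho1994-C7} (and to \cite{CMP} for a more general framework). Your argument is correct and is essentially the strategy of that reference: the pivot is the sequence $m_n=\langle\mathbf{u},\phi^n\rangle$ together with the first-order recursion
\[
m_{n+1}=\frac{d_{2n}}{d_{2n+1}}\,\phi\!\Big(-\frac{e_n}{d_{2n}}\Big)\,m_n,
\]
and the Rodrigues-type construction $D^n(\phi^n\mathbf{u})=\pi_{n,n}\mathbf{u}$ via the iteration $\pi_{n,k+1}=\phi\,\pi_{n,k}'+\psi_{n-k-1}\pi_{n,k}$; orthogonality, the identity $h_n=(-1)^n n!\,k_n\,m_n$, and the expression for $\gamma_{n+1}=h_{n+1}/h_n$ then fall out exactly as you describe. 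The one place you leave as a sketch is $\beta_n$: tracking the subleading coefficient through the same recursion for $\pi_{n,k}$ (the $x^{k}$-coefficient satisfies $S_{k+1}=e_{n-1}L_k+d_{2n-k-3}S_k$, where $L_k$ is the leading coefficient) gives the subleading coefficient of $P_n$ equal to $n\,e_{n-1}/d_{2n-2}$, whence $\beta_n=n\,e_{n-1}/d_{2n-2}-(n+1)\,e_n/d_{2n}$ as claimed.
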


\section{Classical orthogonal polynomials}

The \emph{classical functionals} are the regular solutions (in $\mathcal{P}'$) of the distributional equation \eqref{Pearson-DEq}. The corresponding OP are called \emph{classical orthogonal polynomials}.
In this section we present the most significant results concerning this important class of OP.

\subsection{Definition and characterizations}

\begin{definition}\label{def-u-classical}
Let ${\bf u}\in\mathcal{P}'$.
${\bf u}$ is called a {\sl classical} functional if the following two conditions hold:
\begin{enumerate}
\item[{\rm (i)}]
${\bf u}$ is regular;
\item[{\rm (ii)}]
${\bf u}$ satisfies the distributional differential equation
\begin{equation}\label{EDClassic1}
D(\phi{\bf u})=\psi{\bf u}\;,
\end{equation}
where $\phi$ and $\psi$ are polynomials fulfilling
\begin{equation}\label{grauPhiPsi}
\deg\phi\leq2\;,\quad\deg\psi=1\;.
\end{equation}
\end{enumerate}
An OP $\{P_n\}_{n\geq0}$ with respect to a classical functional
is called a {\sl classical OP}.
\end{definition}

\begin{remark}\em
According with Lemma \ref{poly-grau21}, in the above definition
conditions (\ref{grauPhiPsi}) may be replaced by the weaker conditions
\begin{equation}\label{phiP2psiP1}
\phi\in\mathcal{P}_2\;,\quad\psi\in\mathcal{P}_1\;,\quad\{\phi,\psi\}\neq\mathcal{P}_{-1}=\{0\}\;.
\end{equation}
\end{remark}

Theorem \ref{vCOP-Thm1} gives necessary and sufficient conditions for the existence of solutions
of the distributional equation \eqref{Pearson-DEq},
characterizing also such functionals (and, in particular, solving the question of the existence of classical functionals).
Thus, we may state:
{\it a functional ${\bf u}\in\mathcal{P}'\setminus\{{\bf 0}\}$ is classical if and only if
there exist $\phi\in\mathcal{P}_2$ and $\psi\in\mathcal{P}_1$ such that the following conditions hold:
\begin{equation}\label{P-regular1}
\begin{array}{rl}
{\rm (i)} & D(\phi{\bf u})=\psi{\bf u}\,; \\ 
{\rm (ii)} & na+p\neq0\;,\quad\displaystyle\phi\left(-\frac{nb+q}{2na+p}\right)\neq0\;,
\quad \forall n\in\mathbb{N}_0\;,
\end{array}
\end{equation}
where we have set $\phi(x)=ax^2+bx+c$ and $\psi(x)=px+q$.}

In the next proposition we state several characterizations of the classical OP.
For convenience, we introduce the concept of admissible pair of polynomials.

\begin{definition}
$(\phi,\psi)$ is called an {\sl admissible pair} if
$$
\phi\in\mathcal{P}_2\; , \quad \psi\in\mathcal{P}_1 \; ,\quad
d_n=\psi^{\prime}+\mbox{$\frac{n}{2}$}\,\phi^{\prime\prime}\neq0 \; , \;\;\forall n\in\mathbb{N}_0.
$$
\end{definition}
Introducing this concept makes sense, since according with 
conditions (ii) in (\ref{P-regular1}), only
admissible pairs may appear in the framework of the theory of classical OP.

\begin{theorem}[characterizations of the classical OP]\label{ThmClassicalOP}
Let ${\bf u}\in\mathcal{P}^\prime$, regular, and let $\{P_n\}_{n\geq0}$ be its monic OP.
Then the following properties are equivalent:
\begin{enumerate}
\item[{\rm C1.}]
${\bf u}$ is classical, i.e., there are nonzero polynomials $\phi\in\mathcal{P}_2$ and $\psi\in\mathcal{P}_1$
such that ${\bf u}$ satisfies
$$D(\phi {\bf u})=\psi {\bf u}\; ;$$
\item[{\rm C1$'$.}]
there is an admissible pair $(\phi,\psi)$ such that ${\bf u}$ satisfies
$$D(\phi {\bf u})=\psi {\bf u}\; ;$$
\item[{\rm C2.}] {\rm (Al-Salam $\&$ Chihara)} 
there exist a polynomial $\phi\in\mathcal{P}_2$ and, for each $n\in\mathbb{N}_0$,
complex parameters $a_n$, $b_n$ and $c_n$, with $c_n\neq0$ if $n\geq1$, such that
$$
\phi(x)P_n^{\prime}(x)=a_nP_{n+1}(x)+b_nP_n(x)+c_nP_{n-1}(x)\;,\quad n\geq0 \; ;
$$
\item[{\rm C3.}] {\rm (Hahn)} 
$\Big\{ P_n^{[k]}=\frac{{\rm d}^k}{{\rm d}x^k}\frac{P_{n+k}}{(n+1)_k}\Big\}_{n\geq0}$ is a monic OP for some $k\in\mathbb{N}\,$;
\item[{\rm C3$'$.}] 
$\big\{ P_n^{[k]}\big\}_{n\geq0}$ is a monic OP for each $k\in\mathbb{N}\,$;
\item[{\rm C4.}]
there exist $k\in\mathbb{N}$ and complex parameters $r_{n}^{[k]}$ and $s_{n}^{[k]}$ such that
\begin{align}\label{Eaux}
P_n^{[k-1]}(x)=P_n^{[k]}(x)+r_{n}^{[k]}P_{n-1}^{[k]}(x)+s_{n}^{[k]}P_{n-2}^{[k]}(x) \; , \quad n\geq2\;;
\end{align}
\item[{\rm C4$'$.}]\footnote{It was proved for $k=1$ by Geronimus (see \cite[(42)]{Geronimus1940}).}
for each $k\in\mathbb{N}$, there exist parameters $r_{n}^{[k]}$ and $s_{n}^{[k]}$ such that \eqref{Eaux} holds;
\item[{\rm C5.}] {\rm (Bochner)} 
there exist polynomials $\phi$ and $\psi$ and, for each $n\geq0$, a complex parameter $\lambda_n$,
with $\lambda_n\neq0$ if $n\geq1$, such that $y=P_n(x)$ is a solution of the second order
ordinary differential equation
$$
\phi(x) y^{\prime\prime} + \psi(x) y^{\prime} +\lambda_n y =0 \;,\quad n\geq0\;;
$$
\item[{\rm C6.}] {\rm (Maroni)} 
there is an admissible pair $(\phi,\psi)$ so that the formal Stieltjes series associated with ${\bf u}$,
$S_{\bf u}(z)=-\sum_{n=0}^{\infty}u_{n}/z^{n+1}$,
satisfies (formally) 
$$
\phi(z)S_{\bf u}^{\prime}(z)=[\psi(z)-\phi^{\prime}(z)]S_{\bf u}(z)+(\psi^{\prime}
-\mbox{$\frac12$}\,\phi^{\prime \prime}) u_0 \; ;
$$
\item[{\rm C7.}] {\rm (McCarthy)} 
there exists an admissible pair $(\phi,\psi)$ and, for each $n\geq1$, complex parameters $h_n$ and $t_n$ such that
$$
\phi(P_nP_{n-1})^{\prime}(x)=h_nP_n^2(x)-(\psi-\phi^{\prime})P_nP_{n-1}(x)+t_nP_{n-1}^2(x) \; ;
$$
\item[{\rm C8.}] {\rm (distributional Rodrigues formula)} 
there exist a polynomial $\phi\in\mathcal{P}_2$ and nonzero complex parameters $k_n$
such that
$$
P_n(x) {\bf u} = k_n D^n\big(\phi^n(x){\bf u}\big) \; , \quad n\geq0 \;.
$$
\end{enumerate}
Moreover, the polynomials $\phi$ and $\psi$ 
may be taken the same in all properties above where they appear.
In addition, let the TTRR 
fulfilled by the monic OP $\{P_n\}_{n\geq0}$ be
$$
P_{n+1}(x)=(x-\beta_n)P_n(x)-\gamma_{n}P_{n-1}(x)\;,\quad n\geq0
$$
($P_{-1}(x)=0$; $P_0(x)=1$).
Write $\phi(x)=ax^2+bx+c$, $\psi(x)=px+q$, $d_n=na+p$, and $e_n=nb+q$. Then
$$
\beta_n=-\frac{d_{-2}q+2bnd_{n-1}}{d_{2n}d_{2n-2}}\;,\quad
\gamma_{n}=-\frac{nd_{n-2}}{d_{2n-3}d_{2n-1}}\phi\Big(-\frac{e_{n-1}}{d_{2n-2}}\Big)\;,
$$
and the parameters appearing in the above characterizations
may be computed explicitly: 
\begin{align*}
a_n&=na, \quad  \; \;\qquad b_n=-\mbox{$\frac12$}\psi(\beta_n), \quad \quad c_n=-d_{n-1}\gamma_n,\\[0.5pt]
r_n^{[1]}&=\mbox{$\frac12$}\frac{\psi(\beta_n)}{d_{n-1}}, \quad s_n^{[1]}=-\frac{(n-1)a}{d_{n-2}}\gamma_n,\\
\lambda_n&=-nd_{n-1},\quad h_n=d_{2n-3}, \quad t_n=-d_{2n-1}\gamma_n, \quad k_n=\prod_{i=0}^{n-1}d_{n+i-1}^{-1}.
\end{align*}
\end{theorem}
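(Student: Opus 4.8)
The plan is to prove the equivalence of C1--C8 (and the primed variants) by closing a cycle of implications anchored at C1: from C1 I deduce each of the other statements \emph{keeping the pair $(\phi,\psi)$ fixed}, and each structural statement is shown in turn to force $D(\phi{\bf u})=\psi{\bf u}$. Several arrows are immediate. The equivalence C1 $\Leftrightarrow$ C1' is Lemmas \ref{poly-grau21} and \ref{reg-lemma3}: for a regular ${\bf u}$ satisfying $D(\phi{\bf u})=\psi{\bf u}$ with $\{\phi,\psi\}\neq\{0\}$, both polynomials are nonzero with $\deg\psi=1$ and $d_n=na+p\neq0$ for all $n$, so $(\phi,\psi)$ is automatically admissible, while the converse is trivial. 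Lemma \ref{reg-lemma3} also gives C1 $\Rightarrow$ C3' (${\bf u}^{[k]}=\phi^k{\bf u}$ is regular with monic OP $\{P_n^{[k]}\}$), and C3' $\Rightarrow$ C3 is trivial. Theorem \ref{vCOP-Thm1} gives C1 $\Rightarrow$ C8 directly, via the Rodrigues formula $(\ref{EqDistRod})$, and supplies the closed forms $(\ref{EqDistC3})$ for $\beta_n,\gamma_n$ needed at the end. Finally C1 $\Leftrightarrow$ C6, since by Lemma \ref{Pearson-lemma1} both are equivalent to the moment recurrence $(\ref{le1a})$: multiplying $(\ref{le1a})$ by the appropriate power of $1/z$ and summing (resp. comparing coefficients of powers of $1/z$) turns it into the Stieltjes relation of C6 and back.

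For the remaining forward implications I use $P_n'=nP_{n-1}^{[1]}$ together with the fact that $\{P_n^{[1]}\}$ is the monic OP for $\phi{\bf u}$. For C2, expand $\phi P_n'$ in the basis $\{P_j\}$: the coefficient on $P_j$ is $n\langle\phi{\bf u},P_{n-1}^{[1]}P_j\rangle/\langle{\bf u},P_j^2\rangle$, which vanishes for $j\le n-2$, leaving three terms, with $c_n\neq0$ because $\langle\phi{\bf u},(P_{n-1}^{[1]})^2\rangle\neq0$. For C4' (hence C4), run the same degree count for the two monic OP $\{P_n^{[k-1]}\}$ and $\{P_n^{[k]}\}$, orthogonal with respect to ${\bf u}^{[k-1]}$ and $\phi\cdot{\bf u}^{[k-1]}$: in $P_n^{[k-1]}=\sum_j\lambda_{n,j}P_j^{[k]}$ orthogonality of $P_n^{[k-1]}$ against lower-degree polynomials forces $\lambda_{n,j}=0$ for $j<n-2$, and a leading-coefficient computation gives $\lambda_{n,n}=1$. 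For C5, note $\phi P_n''+\psi P_n'+\lambda_nP_n$ has leading coefficient $nd_{n-1}+\lambda_n$, so the choice $\lambda_n=-nd_{n-1}$ (nonzero for $n\ge1$) lowers its degree to $\le n-1$; pairing it with $P_m$, $m\le n-1$, and integrating by parts against $D(\phi{\bf u})=\psi{\bf u}$ reduces everything to $\langle\phi{\bf u},P_n'P_m'\rangle=nm\langle\phi{\bf u},P_{n-1}^{[1]}P_{m-1}^{[1]}\rangle=0$, so the polynomial is $0$. For C7, substitute the C2 relations for $P_n$ and $P_{n-1}$ into $\phi(P_nP_{n-1})'$ and eliminate $P_{n+1}$ and $P_{n-2}$ via the TTRR of $\{P_n\}$.

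For the converses, C8 $\Rightarrow$ C1 is the case $n=1$ of $(\ref{EqDistRod})$: $P_1{\bf u}=k_1D(\phi{\bf u})$ gives $D(\phi{\bf u})=k_1^{-1}(x-\beta_0){\bf u}$, with $\phi\not\equiv0$ because otherwise $P_1{\bf u}=P_2{\bf u}={\bf 0}$ would force $\gamma_1{\bf u}={\bf 0}$. For C2 $\Rightarrow$ C1, the structure relation gives $\langle D(\phi{\bf u}),P_n\rangle=-\langle{\bf u},\phi P_n'\rangle=0$ for $n\neq1$ and $=-c_1u_0$ for $n=1$, so by Theorem \ref{dualR}(ii) $D(\phi{\bf u})=-c_1u_0\,{\bf a}_1=\psi{\bf u}$ with $\deg\psi=1$, and $\phi\not\equiv0$ since $c_n\neq0$. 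For C5 $\Rightarrow$ C1, pairing the differential equation with $P_m$ and integrating by parts yields $\langle\psi{\bf u}-D(\phi{\bf u}),P_n'\rangle=0$ for all $n\ge1$, and $\{P_n'\}_{n\ge1}$ is a basis of $\mathcal{P}$; C6 $\Rightarrow$ C1 is part of the equivalence established above, and C7 $\Rightarrow$ C1 follows by an analogous manipulation recovering the structure relation. The genuinely delicate converses are C3 $\Rightarrow$ C1 and C4 $\Rightarrow$ C1 (the converses of Hahn and Geronimus). For $k=1$ both are handled by $(\ref{rel-dual-an-ank})$ in the form $D({\bf a}_n^{[1]})=-(n+1){\bf a}_{n+1}$ with ${\bf a}_n=(P_n/\langle{\bf u},P_n^2\rangle){\bf u}$: for C3 (say $\{P_n^{[1]}\}$ a monic OP for ${\bf v}$, so ${\bf a}_0^{[1]}={\bf v}/v_0$), the case $n=0$ gives $D{\bf v}=c(x-\beta_0){\bf u}$ with $c\neq0$, and then $n=1$ forces ${\bf v}=\phi{\bf u}$ for some $\phi\in\mathcal{P}_2\setminus\{0\}$, whence $D(\phi{\bf u})=D{\bf v}=\psi{\bf u}$; for C4 (with $k=1$) one first gets ${\bf a}_0^{[1]}=\phi{\bf u}$, $\phi\in\mathcal{P}_2\setminus\{0\}$, by expanding ${\bf a}_0^{[1]}$ in $\{{\bf a}_j\}$ via $(\ref{Eaux})$ and Proposition \ref{expDB1}, and then applies $D$. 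The main obstacle is reducing the general-$k$ statements to $k=1$: one must show that $\{P_n^{[k]}\}$ being a monic OP forces $\{P_n^{[k-1]}\}$ to be one as well, so that the $k=1$ argument may be applied at level $k-1$ and iterated down to level $0$; since a simple set whose derived sequence is orthogonal need not itself be orthogonal, this rigidity step must exploit that $\{P_n^{[k-1]}\}$ is the genuine $(k-1)$-th derived sequence of the OP $\{P_n\}$, and I expect it --- again obtained through $(\ref{rel-dual-an-ank})$ --- to be the technical heart of the proof.

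Finally, the ``moreover'' is essentially free. Every forward implication above is run from a single fixed admissible pair $(\phi,\psi)$ realizing C1 and produces the corresponding structure with that very pair, so $\phi$ and $\psi$ may be taken the same throughout. Once $(\phi,\psi)$ is fixed, $(\ref{EqDistC3})$ gives $\beta_n$ and $\gamma_n$, and each displayed parameter is then read off from the corresponding construction: $a_n=na$, $\lambda_n=-nd_{n-1}$ and $k_n=\prod_{i=0}^{n-1}d_{n+i-1}^{-1}$ by comparing leading coefficients; $b_n=-\frac12\psi(\beta_n)$ and $r_n^{[1]}=\frac12\psi(\beta_n)/d_{n-1}$ by evaluating the structure and connection relations at suitable points; and $c_n=-d_{n-1}\gamma_n$, $s_n^{[1]}=-\frac{(n-1)a}{d_{n-2}}\gamma_n$, $h_n=d_{2n-3}$, $t_n=-d_{2n-1}\gamma_n$ from the elementary identities tying them to $\gamma_n$ and the $d_j$.
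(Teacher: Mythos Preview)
Your overall architecture is sound and most of the individual implications are handled correctly, but there is one genuine gap, and it sits exactly where you flagged it: the passage from C3 (or C4) at a general $k$ back to C1. Your plan is to prove a descent step --- ``$\{P_n^{[k]}\}$ monic OP $\Rightarrow$ $\{P_n^{[k-1]}\}$ monic OP'' --- and then iterate. The problem is that this descent is essentially equivalent to what you are trying to prove: knowing that the $k$-th derivatives form an OP says nothing \emph{a priori} about the $(k-1)$-th derivatives unless you already know the functional is classical (in which case Lemma~\ref{reg-lemma3} gives all levels at once). Relation $(\ref{rel-dual-an-ank})$ by itself does not yield the descent, because it only relates ${\bf a}_n^{[k]}$ to ${\bf a}_{n+k}$ (level $0$) or to ${\bf a}_{n+1}^{[k-1]}$ via a single derivative; in neither case do you get that the ${\bf a}_n^{[k-1]}$ are of the form $P_n^{[k-1]}{\bf w}/\langle{\bf w},(P_n^{[k-1]})^2\rangle$ for a common ${\bf w}$.

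The paper does \emph{not} attempt this descent. Instead it proves C4 $\Rightarrow$ C1 directly at level $k$. From $(\ref{Eaux})$ one expands ${\bf a}_n^{[k]}$ in the basis $\{{\bf a}_j^{[k-1]}\}$ (only three terms survive), applies $D^k$ via $(\ref{rel-dual-an-ank})$, and after using ${\bf a}_j=P_j{\bf u}/\langle{\bf u},P_j^2\rangle$ obtains, for every $n\ge0$, an equation of the form
\[
D\big(\Phi_{n+k+1}\,{\bf u}\big)=-P_{n+k}\,{\bf u},\qquad \Phi_{n+k+1}\in\mathcal{P}_{n+k+1}\setminus\{0\}.
\]
For $k=1$ the case $n=0$ is already C1. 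For $k\ge2$ one takes $n=0$ and $n=1$, writes $\Phi_{k+1}$ and $\Phi_{k+2}$ as $2\times2$ linear combinations of $P_k$ and $P_{k+1}$ with polynomial coefficients in $\mathcal{P}_1$, and lets $\Phi=\gcd(\Phi_{k+1},\Phi_{k+2})$. Because $P_k$ and $P_{k+1}$ are coprime, every common zero of $\Phi_{k+1},\Phi_{k+2}$ is a zero of the determinant $\Delta_2\in\mathcal{P}_2$, so $\deg\Phi\le2$. Writing $\Phi_{k+1}=\Phi\Phi_{1,k}$, $\Phi_{k+2}=\Phi\Phi_{2,k}$ with $\Phi_{1,k},\Phi_{2,k}$ coprime, the two equations become $\Phi_{j,k}\big(D(\Phi{\bf u})-\Psi{\bf u}\big)={\bf 0}$ for a common $\Psi\in\mathcal{P}_1$, and Proposition~\ref{prop-uvzero} gives $D(\Phi{\bf u})=\Psi{\bf u}$. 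This gcd/coprimality argument is the missing idea; once you have it, your $k=1$ computations and the rest of your plan go through essentially as written. (The paper also routes C3 $\Rightarrow$ C1 through C4, via the straightforward step C3 $\Rightarrow$ C4 obtained by differentiating the TTRR of $\{P_n\}$ $k$ times, so no separate C3 $\Rightarrow$ C1 argument is needed.)
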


\begin{proof}
By Lemma \ref{reg-lemma3} and Theorem \ref{vCOP-Thm1},
C1$\,\Leftrightarrow\,$C1$'$, C1$\,\Rightarrow\,$C3$'$, and C1$'$$\,\Leftrightarrow\,$C8.
Clearly, C3$'$$\,\Rightarrow\,$C3 and C4$'$$\,\Rightarrow\,$C4.
We show that C3$'$$\,\Rightarrow\,$C4$'$ using the same arguments
of the proof of C3$\,\Rightarrow\,$C4 given in bellow.
The proof of C1$'$$\,\Leftrightarrow\,$C6 is left to the reader.
Thus, we only need to show that:
\smallskip
\begin{center}
C1$'$$\,\Rightarrow\,$C2$\,\Rightarrow\,$C3$\,\Rightarrow\,$C4$\,\Rightarrow\,$C1$\,,\;\;$
C1$\,\Leftrightarrow\,$C5$\,,\;\;$ C2$\,\Leftrightarrow\,$C7.
\end{center}
\smallskip

(C1$'$$\,\Rightarrow\,$C2).
Assume that C1$'$ holds. Fix $n\in\mathbb{N}_0$. Since $\deg(\phi P_n')\leq n+1$, then
\begin{equation}\label{ThmChar1}
\phi P'_n=\sum_{j=0}^{n+1}a_{n,j}P_j\,,\quad
a_{n,j}=\frac{\langle{\bf u},\phi P_n'P_j\rangle}{\langle{\bf u},P_j^2\rangle}\;.
\end{equation}
For each integer number $j$, with $0\leq j\leq n+1$, we deduce
\begin{equation}\label{ThmChar2}
\begin{array}{rl}
\langle{\bf u},\phi P_n'P_j\rangle &=\,\langle\phi{\bf u},(P_nP_j)'-P_nP_j'\rangle=
-\langle D(\phi{\bf u}),P_nP_j\rangle-\langle \phi{\bf u}, P_nP_j'\rangle \\ [0.5em]
&=\, -\langle {\bf u},\psi P_jP_n\rangle-\langle {\bf u},\phi P_j'P_n\rangle\;.
\end{array}
\end{equation}
If $0\leq j\leq n-2$ we obtain $\langle{\bf u},\phi P_n'P_j\rangle=0$, and so $a_{n,j}=0$.
Thus, (\ref{ThmChar1}) reduces to
$$
\phi P'_n=a_nP_{n+1}+b_nP_n+c_nP_{n-1}\;,\quad n\geq0\;,
$$
where, writing $\phi(x)=ax^2+bx+c$ and $\psi(x)=px+q$,
$a_n=na$ (by comparison of coefficients), $b_n=a_{n,n}$, and $c_n=a_{n,n-1}$.
Setting $j=n-1$ in (\ref{ThmChar2}), we deduce
$$
\langle{\bf u},\phi P_n'P_{n-1}\rangle=
-\langle {\bf u},(\psi P_{n-1}+\phi P_{n-1}')P_n\rangle=-d_{n-1}\langle{\bf u}, P_n^2\rangle\;,
$$
hence
$$
c_n=a_{n,n-1}=\frac{\langle{\bf u},\phi P_n'P_{n-1}\rangle}{\langle{\bf u},P_{n-1}^2\rangle}
=\frac{\langle{\bf u},\phi P_n'P_{n-1}\rangle}{\langle{\bf u},P_{n}^2\rangle}
\frac{\langle{\bf u},P_n^2\rangle}{\langle{\bf u},P_{n-1}^2\rangle}=-d_{n-1}\gamma_n\;,\quad n\geq1\;.
$$
Since, by hypothesis, $(\phi,\psi)$ is an admissible pair,
then we may conclude that $c_n\neq0$ for each $n\geq1$. Thus C1$'$$\,\Rightarrow\,$C2.
Notice that taking $j=n$ in (\ref{ThmChar2}) yields
$$
\langle{\bf u},\phi P_n'P_n\rangle=-\mbox{$\frac12\,$}\langle {\bf u},\psi P_n^2\rangle
=-\mbox{$\frac12\,$}\big(p\langle {\bf u},x P_n^2\rangle+q\langle {\bf u},P_n^2\rangle\big)\;,
$$
hence we deduce the expression for $b_n$ given in the statement of the theorem:
$$
b_n=a_{n,n}=\frac{\langle{\bf u},\phi P_n'P_{n}\rangle}{\langle{\bf u},P_{n}^2\rangle}
=-\mbox{$\frac12\,$}\Big(p\frac{\langle {\bf u},x P_n^2\rangle}{\langle {\bf u},P_n^2\rangle}+q\Big)
=-\mbox{$\frac12$}\psi(\beta_n)\;.
$$

(C2$\,\Rightarrow\,$C3).
Suppose that C2 holds.
We will show that $\{ P_n^{[1]}=P_{n+1}^{\prime}/(n+1)\}_{n\geq0}$
is a monic OP with respect to ${\bf v}=\phi{\bf u}$.
Indeed, for each $n\in\mathbb{N}_0$ and $0\leq m\leq n$, 
\begin{align*}
(n+1)\langle{\bf v},x^m P_n^{[1]} \rangle &=\langle\phi{\bf u},x^mP_{n+1}'\rangle
=\langle{\bf u},\big(\phi P_{n+1}'\big)x^m\rangle \\
&= \langle {\bf u},(a_{n+1}P_{n+2}+b_{n+1}P_{n+1}+c_{n+1}P_{n})x^m\rangle\\
&=c_{n+1}\langle {\bf u},P_{n}^2\rangle\delta_{m,n}\;. 
\end{align*}
Therefore, since $c_{n+1}\neq0$ for each $n\geq0$,
we conclude that $\{P_n^{[1]}\}_{n\geq0}$
is a monic OP (with respect to ${\bf v}=\phi{\bf u}$).
\smallskip

(C3$\,\Rightarrow\,$C4).
By hypothesis, $\{P_n^{[k]}=\frac{{\rm d}^k}{{\rm d}x^k}\big(\frac{P_{n+k}}{(n+1)_k}\big)\}_{n\geq0}$
is a monic OP for some (fixed) $k\in\mathbb{N}$. Then there exists $\beta_n^{[k]}\in\mathbb{C}$ and $\gamma_n^{[k]}\in\mathbb{C}\setminus\{0\}$ such that
\begin{equation}\label{PnkTTRR}
xP_n^{[k]}=P_{n+1}^{[k]}+\beta_n^{[k]}P_n^{[k]}+\gamma_n^{[k]}P_{n-1}^{[k]} \;,\quad n\in\mathbb{N}_0 \,.
\end{equation}
Similarly, there exists $\beta_n\in\mathbb{C}$ and $\gamma_n\in\mathbb{C}\setminus\{0\}$ such that
\begin{equation}\label{PnTTRR}
xP_n=P_{n+1}+\beta_nP_n+\gamma_{n}P_{n-1}\;,\quad n\in\mathbb{N}_0\,.
\end{equation}
Changing $n$ into $n+k$ in (\ref{PnTTRR}), then taking the derivative of order $k$ in both sides of the resulting equation
and using Leibnitz rule on the left-hand side, we find 
$$
xP_n^{[k]}+\frac{k}{n+1}P_{n+1}^{[k-1]}=
\frac{n+k+1}{n+1}P_{n+1}^{[k]}+\beta_{n+k}P_n^{[k]}+\frac{n\gamma_{n+k}}{n+k}P_{n-1}^{[k]} \;,\quad n\in\mathbb{N}_0 \,.
$$
In this equation, replacing $xP_n^{[k]}$ by the right-hand side of (\ref{PnkTTRR}), and then changing $n$ into $n-1$,
we obtain \eqref{Eaux}, with
$$
r_{n}^{[k]}=\frac{n\,\big(\beta_{n+k-1}-\beta_{n-1}^{[k]}\big)}{k}\;,\quad
s_{n}^{[k]}=\frac{n\,\big((n-1)\gamma_{n+k-1}-(n+k-1)\gamma_{n-1}^{[k]}\big)}{k(n+k-1)}\;.
$$
\smallskip

(C4$\,\Rightarrow\,$C1).
By hypothesis \eqref{Eaux} holds.
Let $\{{\bf a}_n\}_{n\geq0}$ and $\{{\bf a}_n^{[k]}\}_{n\geq0}$ be the dual basis
for $\{P_n\}_{n\geq0}$ and $\{P_n^{[k]}\}_{n\geq0}$, respectively. By Proposition \ref{expDB1},
$\,{\bf a}_n^{[k]}=\sum_{j\geq0}\langle{\bf a}_n^{[k]},P_j^{[k-1]}\rangle {\bf a}_j^{[k-1]}$ for each $n\in\mathbb{N}_0$.
Using \eqref{Eaux}, we compute
\begin{align*}
\langle{\bf a}_n^{[k]},P_j^{[k-1]}\rangle
&=\langle{\bf a}_n^{[k]},P_j^{[k]}\rangle
+r_j^{[k]}\langle{\bf a}_n^{[k]},P_{j-1}^{[k]}\rangle
+s_j^{[k]}\langle{\bf a}_n^{[k]},P_{j-2}^{[k]}\rangle\\
&=\left\{
\begin{array}{cl}1\;, &\mbox{\rm if}\; j=n \\ [0.5em]
r_{n+1}^{[k]}\;, &\mbox{\rm if}\; j=n+1 \\ [0.5em]
s_{n+2}^{[k]}\;, &\mbox{\rm if}\; j=n+2 \\ [0.5em]
0\;, &\mbox{\rm otherwise}\,. 
\end{array}
\right.
\end{align*}
Hence
$$
{\bf a}_n^{[k]}= {\bf a}_n^{[k-1]}+r_{n+1}^{[k]}{\bf a}_{n+1}^{[k-1]}+s_{n+2}^{[k]}{\bf a}_{n+2}^{[k-1]}\;,\quad n\in\mathbb{N}_0\;.
$$
Taking the (distributional) derivative of order $k$ in both sides of this equation, and using the relations
$D^j\big({\bf a}_n^{[j]}\big)=(-1)^j(n+1)_j\,{\bf a}_{n+j}$, we obtain
$$
D\left(\frac{1}{n+k}\,{\bf a}_{n+k-1}+\frac{r_{n+1}^{[k]}}{n+1}\,{\bf a}_{n+k}
+\frac{(n+k+1)s_{n+2}^{[k]}}{(n+1)(n+2)}\,{\bf a}_{n+k+1}\right)=-{\bf a}_{n+k}\;,
\; n\in\mathbb{N}_0\;.
$$
Therefore, since, by Theorem \ref{dualR}, ${\bf a}_j=\frac{P_j}{\langle{\bf u},P_j^2\rangle}\,{\bf u}$
for $j\in\mathbb{N}_0$ and, by (\ref{TTRRb}),
$\gamma_j=\frac{{\langle{\bf u},P_j^2\rangle}}{{\langle{\bf u},P_{j-1}^2\rangle}}$ for $j\in\mathbb{N}$,
being $\gamma_j$ the parameter appearing in (\ref{PnTTRR}), we deduce
\begin{equation}\label{DPhink1}
D\big(\Phi_{n+k+1}\,{\bf u}\big)=-P_{n+k}\,{\bf u}\;,\quad n\in\mathbb{N}_0\;,
\end{equation}
where $\Phi_{n+k+1}$ is a polynomial of degree at most $n+k+1$, given by
\begin{align*}
&\Phi_{n+k+1}(x)\\
&=\frac{\gamma_{n+k}}{n+k}\,P_{n+k-1}(x)+
\frac{r_{n+1}^{[k]}}{n+1}\,P_{n+k}(x)+\frac{(n+k+1)s_{n+2}^{[k]}}{(n+1)(n+2)\gamma_{n+k+1}}\,P_{n+k+1}(x)\,.
\end{align*}
Since $\Phi_{n+k+1}$ is a (finite) linear combination of polynomials of the simple set $\{P_j\}_{j\geq0}$
and $\gamma_{n+k}\neq0$, then $\Phi_{n+k+1}$ does not vanishes identically, so
$\Phi_{n+k+1}\in\mathcal{P}_{n+k+1}\setminus\{0\}$.
Setting $n=0$ and $n=1$ in (\ref{DPhink1}) we obtain the two equations
\begin{equation}\label{DPhink1a}
D\big(\Phi_{k+1}\,{\bf u}\big)=-P_{k}\,{\bf u}\;,\quad
D\big(\Phi_{k+2}\,{\bf u}\big)=-P_{k+1}\,{\bf u}\;.
\end{equation}
If $k=1$ it follows immediately from the first of these equations that C1 holds. 
Henceforth, assume that $k\geq2$.
Setting $n=0$ and $n=1$ in the definition of $\Phi_{n+k+1}$
and using the TTRR (\ref{PnTTRR}), we easily deduce
\begin{equation}\label{DPhink1b}
\left\{
\begin{array}{rcl}
\Phi_{k+1}(x) &=& E_0(x;k)P_{k+1}(x)+F_1(x;k)P_{k}(x)\;, \\ [0.5em]
\Phi_{k+2}(x) &=& G_1(x;k)P_{k+1}(x)+H_0(x;k)P_{k}(x)\;,
\end{array}
\right.
\end{equation}
where $E_0(\cdot;k),H_0(\cdot;k)\in\mathcal{P}_0$ and $F_1(\cdot;k),G_1(\cdot;k)\in\mathcal{P}_1$,
explicitly given by
\begin{equation}\label{DPhink1ba}
\begin{array}{l}
E_0(x;k) = \displaystyle\frac{(k+1)s_{2}^{[k]}}{2\gamma_{k+1}}-\frac{1}{k}\;,\quad
F_1(x;k) = \frac{x-\beta_k}{k}+r_{1}^{[k]} \;, \\ [0.75em]
G_1(x;k) = \displaystyle\frac{(k+2)s_{3}^{[k]}(x-\beta_{k+1})}{6\gamma_{k+2}}+\frac{r_{2}^{[k]}}{2}\;, \;
H_0(x;k) = \frac{\gamma_{k+1}}{k+1}-\frac{(k+2)s_{3}^{[k]}\gamma_{k+1}}{6\gamma_{k+2}}\;.
\end{array}
\end{equation}
Let $\Delta_2(x)\equiv\Delta_2(x;k)=E_0(x;k)H_0(x;k)- F_1(x;k)G_1(x;k)$,
the determinant of the system (\ref{DPhink1b}).
Using (\ref{DPhink1a})--(\ref{DPhink1ba}), and taking into account
that ${\bf u}$ is regular, we may prove that
$\Delta_2\in\mathcal{P}_2\setminus\{0\}$ .
Solving (\ref{DPhink1b}) for $P_k$ and $P_{k+1}$ we obtain
\begin{eqnarray}
\label{DPhink1Det2}
\Delta_2(x)P_{k+1}(x)=H_0(x;k)\Phi_{k+1}(x)-F_1(x;k)\Phi_{k+2}(x)\,, \\
\label{DPhink1Det3}
\Delta_2(x)P_{k}(x)=E_0(x;k)\Phi_{k+2}(x)-G_1(x;k)\Phi_{k+1}(x)\;.
\end{eqnarray}
Since $P_k$ and $P_{k+1}$ cannot share zeros,
it follows from (\ref{DPhink1Det2})--(\ref{DPhink1Det3})
that any common zero of $\Phi_{k+1}$ and $\Phi_{k+2}$
(if there is some) must be a zero of $\Delta_2$.
Let $\Phi$ be the greatest common divisor of $\Phi_{k+1}$ and $\Phi_{k+2}$, i.e.,
$$
\Phi(x)=\mbox{g.c.d.}\,\{\Phi_{k+1}(x),\Phi_{k+2}(x)\}\;.
$$
Any zero of $\Phi$ is also a zero of both $\Phi_{k+1}$ and $\Phi_{k+2}$,
and so it is a zero of $\Delta_2$. Therefore, $\Phi\in\mathcal{P}_2\setminus\{0\}$.
(Notice that indeed $\Phi\not\equiv0$, since $\Phi_{k+1}\not\equiv0$ and $\Phi_{k+2}\not\equiv0$.)
Moreover, there exist polynomials $\Phi_{1,k}$ and $\Phi_{2,k}$, with no common zeros, such that
\begin{eqnarray}
\label{DPhink1A}\Phi_{k+1}=\Phi\,\Phi_{1,k}\;,\quad\Phi_{k+2}=\Phi\,\Phi_{2,k}\;,
\qquad\qquad\qquad \\ [0.25em]
\label{DPhink1Aa}
\Phi_{1,k}\in\mathcal{P}_{k+1-\ell}\setminus\{0\}\;,\quad
\Phi_{2,k}\in\mathcal{P}_{k+2-\ell}\setminus\{0\}\;,\quad
\ell=\deg\Phi\leq2\;.
\end{eqnarray}
From (\ref{DPhink1a}) and (\ref{DPhink1A}) we deduce
\begin{equation}\label{DPhink1B}
\Phi_{1,k}D(\Phi{\bf u})=-(P_k+\Phi_{1,k}'\Phi){\bf u}\;,\quad
\Phi_{2,k}D(\Phi{\bf u})=-(P_{k+1}+\Phi_{2,k}'\Phi){\bf u}\;.
\end{equation}
Combining these two equations yields
$\big(\Phi_{1,k}(P_{k+1}+\Phi_{2,k}'\Phi)-\Phi_{2,k}(P_k+\Phi_{1,k}'\Phi)\big){\bf u}={\bf 0}$,
and so, since ${\bf u}$ is regular,
$\Phi_{1,k}(P_{k+1}+\Phi_{2,k}'\Phi)=\Phi_{2,k}(P_k+\Phi_{1,k}'\Phi)$.
Therefore, taking into account that $\Phi_{1,k}$ and $\Phi_{2,k}$ have no common zeros
and (\ref{DPhink1Aa}) holds, we may ensure that there exists a polynomial $\Psi\in\mathcal{P}_1$ such that
\begin{equation}\label{DPhink1E}
P_k+\Phi_{1,k}'\Phi=-\Psi\Phi_{1,k}\;,\quad
P_{k+1}+\Phi_{2,k}'\Phi=-\Psi\Phi_{2,k}\;.
\end{equation}
Combining equations (\ref{DPhink1B}) and (\ref{DPhink1E}) we deduce
$$
\Phi_{1,k}\big(D(\Phi{\bf u})-\Psi{\bf u}\big)=
\Phi_{2,k}\big(D(\Phi{\bf u})-\Psi{\bf u}\big)={\bf 0}\;.
$$
From these equations, and using once again the fact that
$\Phi_{1,k}$ and $\Phi_{2,k}$ have no common zeros,
we conclude, by Proposition \ref{prop-uvzero}, that $D(\Phi{\bf u})=\Psi{\bf u}$. 
Thus C4$\,\Rightarrow\,$C1.
The formulas for $r_n^{[1]}$ and $s_n^{[1]}$ given in the statement of
the theorem may be derived as follows. We have already proved that
C4$\,\Rightarrow\,$C1$\,\Rightarrow\,$C1$'$$\,\Rightarrow\,$C2$\,\Rightarrow\,$C3$\,\Rightarrow\,$C4,
and we see that the polynomials $\phi$ and $\psi$ appearing in all these characterizations may be taken the same.
As we have seen, 
the formulas for $b_n$ and $c_n$ given in the statement of the theorem hold.
We now use these formulas to obtain the expressions for $r_n^{[1]}$ and $s_n^{[1]}$.
Set $Q_n=P_n^{[1]}=P_{n+1}'/(n+1)$.
By C4, $P_n=Q_n+r_n^{[1]}Q_{n-1}+s_n^{[1]}Q_{n-2}$ if $n\geq2$.
Hence, since $\{Q_n\}_{n\geq0}$ is a monic OP with respect to ${\bf v}=\phi{\bf u}$,
we deduce, for each $n\geq2$,
\begin{align*}
r_n^{[1]}&=\frac{\langle{\bf u},\phi  P_n P_n^{\prime}\rangle }{\langle{\bf u},\phi  P_n^{\prime} P_{n-1}\rangle }\\
&=\frac{\langle{\bf u}, P_{n-1}^2\rangle }{\langle{\bf u},\phi  P_n^{\prime} P_{n-1}\rangle }
\frac{\langle{\bf u}, \phi  P_n^{\prime} P_n\rangle }{\langle{\bf u}, P_n^2\rangle }
\frac{\langle{\bf u}, P_n^2\rangle }{\langle{\bf u}, P_{n-1}^2\rangle }=\frac{1}{c_n} b_n \gamma_n
=\mbox{$\frac{1}{2}$}\frac{\psi(\beta_n)}{d_{n-1}}\,,
\end{align*}
where the third equality holds taking into account C2. 
Similarly, for each $n\geq2$,
\begin{align*}
s_n^{[1]}&=\frac{a\langle{\bf u}, P_n^2\rangle }{\frac{1}{n-1}\langle{\bf u},\phi  P_{n-1}^{\prime} P_{n-2}\rangle}
=\frac{(n-1)a \langle{\bf u}, P_n^2\rangle}{c_{n-1}\langle{\bf u}, P_{n-2}^2\rangle}\\
&=\frac{(n-1)a}{c_{n-1}}\gamma_{n-1}\gamma_n=-\frac{(n-1)a}{d_{n-2}}\gamma_n \;. 
\end{align*}

(C1$\,\Rightarrow\,$C5).
By hypothesis, $D(\phi{\bf u})=\psi{\bf u}$, where
$\phi\in\mathcal{P}_2$, $\psi\in\mathcal{P}_1$, and $\deg\psi=1$ (cf. Lemma \ref{poly-grau21}).
Fix $n\in\mathbb{N}$, and write
\begin{equation}\label{C1C5C3eq}
\phi P_n^{\prime\prime}+\psi P_n'=\sum_{j=0}^n\lambda_{n,j}P_j\;.  
\end{equation}
Then, for each $j$ such that $0\leq j\leq n$,
\begin{align*}
\langle{\bf u}, P_j^2\rangle\lambda_{n,j} &=
\langle {\bf u},(\phi P_n^{\prime\prime}+\psi P_n')P_j\rangle =
\langle \phi{\bf u},P_n^{\prime\prime}P_j\rangle +\langle\psi{\bf u}, P_n'P_j\rangle \\ 
&= \langle \phi{\bf u},(P_n'P_j)'\rangle-\langle \phi{\bf u},P_n'P_j'\rangle+\langle\psi{\bf u}, P_n'P_j\rangle
=-\langle \phi{\bf u},P_n'P_j'\rangle\,.
\end{align*}
Since by hypothesis C1 holds, and we have already proved that
C1$\,\Rightarrow\,$C1$'$$\,\Rightarrow\,$C2$\,\Rightarrow\,$C3, and in the proof of
C2$\,\Rightarrow\,$C3 we have shown that $\{Q_n=P'_{n+1}/(n+1)\}_{n\geq0}$
is a monic OP with respect to ${\bf v}=\phi{\bf u}$,
then $\langle \phi{\bf u},P_n'P_j'\rangle=0$ if $j\neq n$,
hence (\ref{C1C5C3eq}) reduces to
\begin{equation}\label{C1C5C3eqA}
\phi P_n^{\prime\prime}+\psi P_n'+\lambda_nP_n=0\;,\quad n\geq0\;,
\end{equation}
where $\lambda_n=-\lambda_{n,n}$.
Comparing leading coefficients in (\ref{C1C5C3eqA}), and setting
$\phi(x)=ax^2+bx+c$ and $\psi(x)=px+q$, we obtain
$\lambda_n=-n\big((n-1)a+p\big)=-nd_{n-1}$, hence $\lambda_n\neq0$ if $n\geq1$
(since C1$\,\Rightarrow\,$C1$'$, so $(\phi,\psi)$ is an admissible pair).
Thus C1$\,\Rightarrow\,$C5.
\smallskip

(C5$\,\Rightarrow\,$C1).
By hypothesis, there extist $\phi,\psi\in\mathcal{P}$, and $\lambda_n\in\mathbb{C}$, with $\lambda_n\neq0$ if $n\geq1$,
such that $-\phi P_{n+1}^{\prime\prime}=\psi P_{n+1}'+\lambda_{n+1} P_{n+1}$.
Taking in this equation $n=0$ and $n=1$ we deduce $\psi=-\lambda_1P_1\in\mathcal{P}_1\setminus\mathcal{P}_0$
and $\phi=-(\psi P_2'+\lambda_2P_2)/2\in\mathcal{P}_2$.
We will prove that $D(\phi{\bf u})=\psi{\bf u}$ by showing that the actions of the functionals
$D(\phi{\bf u})$ and $\psi{\bf u}$ coincide on the simple set $\{Q_n\}_{n\geq0}$. 
Indeed,
\begin{align*}
\langle D(\phi{\bf u}),Q_n\rangle &=\frac{1}{n+1} \langle D(\phi{\bf u}),P'_{n+1}\rangle\\
&= -\frac{1}{n+1}\langle{\bf u},\phi P_{n+1}^{\prime\prime}\rangle=\frac{1}{n+1}\langle{\bf u},\psi P'_{n+1}+\lambda_{n+1}P_{n+1}\rangle \\
& = \langle{\bf u}, \psi Q_n\rangle+\frac{\lambda_{n+1}}{n+1} \langle{\bf u},P_{n+1}\rangle=\langle\psi{\bf u}, Q_n\rangle.
\end{align*}
Since at least one of the polynomials $\phi$ and $\psi$ is nonzero (because $\lambda_n\neq0$), C1 holds.
\smallskip

(C2$\,\Rightarrow\,$C7).
Since by hypothesis ({\rm C2}) holds, we may write
\begin{eqnarray}
\phi P_n^{\prime} =a_nP_{n+1}+b_nP_n+c_nP_{n-1} \; , \quad \label{c.27} \\ [0.5em]
\phi P_{n-1}^{\prime} =a_{n-1}P_n+b_{n-1}P_{n-1}+c_{n-1}P_{n-2}\;.
\label{c.28}
\end{eqnarray}
Multiplying (\ref{c.27}) by $P_{n-1}$ and (\ref{c.28}) by $P_n$ and
adding the resulting equalities, we find that $\phi(P_n P_{n-1})^{\prime}$
is a linear combination of the polynomials
$P_n^2$, $P_nP_{n-1}$, $P_{n-1}^2$, $P_{n+1}P_{n-1}$ and $P_nP_{n-2}$.
Substituting $P_{n+1}$ and $P_{n-2}$ by the corresponding expressions given by the TTRR, we deduce
\begin{equation}
\phi(P_nP_{n-1})^{\prime}=A_nP_n^2+(B_nx+C_n)P_nP_{n-1}+D_nP_{n-1}^2 \;, \quad n\geq1\;,
\label{c.29}
\end{equation}
where
$$
\begin{array}{l}
A_n=a_{n-1}-\frac{c_{n-1}}{\gamma_{n-1}} \; , \quad B_n=a_n+ \frac{c_{n-1}}{\gamma_{n-1}} \; , \\ [0.5em]
C_n=-a_n \beta_n +b_n+b_{n-1}-\frac{c_{n-1}}{\gamma_{n-1}}\beta_{n-1} \; , \quad
D_n=c_n-a_n\gamma_n \; .
\end{array}
$$
Write $\phi(x)=ax^2+bx+c$ and $\psi(x)=px+q$.
We have already seen that C2$\,\Leftrightarrow\,$C1$'$, and while proving
C1$'$$\,\Rightarrow\,$C2 we have shown that the coefficients $a_n$, $b_n$, and $c_n$ appearing in
(\ref{c.27}) are given by $a_n=na$, $b_n=-\frac12\psi(\beta_n)$, and $c_n=-d_{n-1}\gamma_n$.
It follows that
\begin{equation}\label{AnBnCnDn1}
\begin{array}{rcl}
A_n=d_{2n-3} \; , \quad B_n=2a-p \; , \quad D_n=-d_{2n-1}\gamma_n \; , \\ [0.5em]
C_n = -\frac{1}{2} \big( d_{2n}\beta_n -d_{2n-4}\beta_{n-1} \big) -q = b-q \; ,
\end{array}
\end{equation}
where the last equality is easily derived using the expressions
for the $\beta-$parameters given in the statement of the theorem.
Therefore, $B_n x+C_n=\phi^{\prime}-\psi$ (independent of $n$).
Finally, substituting (\ref{AnBnCnDn1}) into (\ref{c.29}) yields the equation
appearing in C7, being $h_n=A_n=d_{2n-3}$ and $t_n=D_n=-d_{2n-1}\gamma_n$ for each $n\geq1$.
Thus C2$\,\Rightarrow\,$C7.
\smallskip

(C7$\,\Rightarrow\,$C2).
Fix an integer $n\geq1$. 
For this $n$, rewrite the equation in {\rm (C7)} as
$$
\big(\phi P_n^{\prime}+\psi P_n-t_nP_{n-1}\big)P_{n-1}=
\big(-\phi P_{n-1}^{\prime}+\phi' P_{n-1}+h_nP_n\big)P_{n}\;.
$$
Therefore, since $P_n$ and $P_{n-1}$ have no common zeros,
there is 
$\pi_{1,n}\in\mathcal{P}_1$ such that
\begin{eqnarray}
\phi P_n^{\prime}+\psi P_n-t_nP_{n-1}=\pi_{1,n} P_n \; , \label{Eq2} \\ 
-\phi P_{n-1}^{\prime}+\phi' P_{n-1}+h_nP_n=\pi_{1,n}P_{n-1}\;.\label{Eq3}
\end{eqnarray}
By comparing the leading coefficients on both sides of equation (\ref{Eq2}) we deduce
$\pi_{1,n}(x)=d_nx+z_n$ for some $z_n\in\mathbb{C}$ (and $d_n=na+p$).
By hypothesis, $(\phi,\psi)$ is an admissible pair, hence $d_n\neq0$ 
and so $\deg\pi_{1,n}=1$. 
Moreover, by the TTRR for $\{P_n\}_{n\geq0}$, $xP_n=P_{n+1}+\beta_nP_n+\gamma_{n}P_{n-1}$.
Therefore, (\ref{Eq2}) may be rewritten as
$$
\phi P_n^{\prime} =a_nP_{n+1}+b_nP_n+c_nP_{n-1} \; , \label{Eq4}
$$
where $a_n=na$, $b_n=na\beta_n+z_n-q$, and $c_n=na\gamma_n+t_n$.
To conclude the proof we need to show that $c_n\neq0$ for all $n\geq1$.
Indeed, changing $n$ into $n+1$ in (\ref{Eq3}) and adding the resulting equation with (\ref{Eq2}), we obtain
$$
(\psi+\phi') P_n-t_nP_{n-1}+h_{n+1}P_{n+1} =
\big((d_n+d_{n+1})x+(z_n+z_{n+1})\big)P_n \; . \label{Eq5}
$$
Since $\psi+\phi'=(2a+p)x+q+b$ and taking into account once again the TTRR for $\{P_n\}_{n\geq0}$,
the last equation may be rewritten as a trivial linear combination
of the three polynomials $P_{n+1}$, $P_{n}$, and $P_{n-1}$.
Thus, we deduce
$$
h_{n+1}=d_{2n-1}\;,\quad
z_{n+1}=-z_n-d_{2n-1}\beta_n+q+b\;,\quad
t_n=-d_{2n-1}\gamma_n\;.
$$
Therefore, $c_n=na\gamma_n+t_n=-d_{n-1}\gamma_n\neq0$ 
(since $n\geq1$).
This completes the proof.
\end{proof}

\begin{remark}\em
The parameters $\beta_n$ and $\gamma_n$ appearing in Theorem \ref{ThmClassicalOP}
may be written explicitly in terms of the coefficients of $\phi$ and $\psi$ as follows:
$$
\begin{array}{c}
\beta_n=\displaystyle-\frac{(-2a+p)q+2bn[(n-1)a+p]}{(2na+p)[(2n-2)a+p]}\;, \\ [1.25em]
\gamma_{n+1}=\displaystyle\frac{-(n+1)[(n-1)a+p][a(nb+q)^2-b(nb+q)(2na+p)+c(2na+p)^2]}{[(2n-1)a+p](2na+p)^2[2(n+1)a+p]}\;.
\end{array}
$$
\end{remark}

\subsection{Classification and canonical representatives}

We all always hear say:
{\it up to constant factors and affine changes of variables,
there are only four (parametric) families of classical OP,
namely, Hermite, Laguerre, Jacobi, and Bessel polynomials}. But, what is the rigorous meaning of this statement?
The corresponding regular functionals will be denoted by
${\bf u}_H$, ${\bf u}_L^{(\alpha)}$,
${\bf u}_J^{(\alpha,\beta)}$, and ${\bf u}_B^{(\alpha)}$ (resp.) and these will be called
the {\it canonical representatives} (or {\it canonical forms}) of the classical functionals.
Their description is given in Table \ref{Table1}.
Each one of these functionals fulfils $(\ref{EDClassic1})$,
being the corresponding pair $(\phi,\psi)\equiv(\Phi,\Psi)$ given in the table.
The regularity conditions in the table are determined by conditions (ii) appearing in
$(\ref{P-regular1})$.

\begin{table}
\centering
\begin{tabular}{|>{\columncolor[gray]{0.95}}c|c|c|c|c|}
\hline \rowcolor[gray]{0.95}
\rule{0pt}{1.2em} Class & ${\bf u}$ & $\Phi$ & $\Psi$ & regularity conditions \\
\hline
\rule{0pt}{1.2em} Hermite & ${\bf u}_H$ & $1$& $-2x$ & ------ \\
\rule{0pt}{1.2em} Laguerre & ${\bf u}_L^{(\alpha)}$ & $x$ & $-x+\alpha +1$ &
$-\alpha\not\in\mathbb{N}$ \\
\rule{0pt}{1.2em} Jacobi & ${\bf u}_J^{(\alpha,\beta)}$ & $1-x^2$ & $-(\alpha + \beta +2)x +\beta-\alpha $ &
$-\alpha,-\beta,-(\alpha+\beta+1)\not\in\mathbb{N}$ \\
\rule{0pt}{1.2em} Bessel & ${\bf u}_B^{(\alpha)}$ & $x^2$ & $(\alpha +2)x +2$  &
$-(\alpha+1)\not\in\mathbb{N}$ \\
\hline
\end{tabular}
\medskip
\caption{Classification and canonical forms of the classical functionals}\label{Table1}
\end{table}

Ultimately, denoting by $[{\bf u}]$ the equivalent class
determined by a functional ${\bf u}\in\mathcal{P}'$ (see \cite[Section 3.1.2.4, pp. 18-19]{Maroni1994}), and setting
$\mathcal{P}'_C=\{{\bf u}\in\mathcal{P}'\,|\,\mbox{\rm ${\bf u}$ is classical}\}$,
we will show that
$$
\mathcal{P}'_C/_\sim=\big\{\,[{\bf u}]\,|\,{\bf u}\in\mathcal{P}'_C\big\}=
\big\{[{\bf u}_H], [{\bf u}_L^{(\alpha)}],
[{\bf u}_J^{(\alpha,\beta)}], [{\bf u}_B^{(\alpha)}]\,\big\}\;,
$$
where the parameters $\alpha$ and $\beta$ vary on $\mathbb{C}$ subject to the regularity conditions
in Table \ref{Table1}, and $\sim$ is an equivalence relation in $\mathcal{P}'$ defined by
\begin{equation}\label{u-equiv-v1}
{\bf u}\sim{\bf v}\qquad\mbox{\rm iff}\qquad
\exists A\in\mathbb{C}\setminus\{0\}\;,\;\; \exists B\in\mathbb{C}\;:\;\;
{\bf v}=\big(\bm{h}_{A^{-1}}\circ\bm{\tau}_{-B}\big){\bf u}\;.
\end{equation}
 
We start by proving a proposition that allow us to ensure that this equivalence relation
preserves the classical character of a given classical functional.

\begin{lemma}\label{equiv-u-v}
Let ${\bf u},{\bf v}\in\mathcal{P}'$ and suppose that
${\bf u}\sim{\bf v}$, i.e., $(\ref{u-equiv-v1})$ holds.
Suppose that there exist two polynomials $\phi$ and $\psi$ such that
$$D(\phi{\bf u})=\psi{\bf u}\;.$$
Let $\Phi(x)=K\phi(Ax+B)$ and $\Psi(x)=KA\psi(Ax+B)$, being $K\in\mathbb{C}\setminus\{0\}$.
Then
$$D(\Phi{\bf v})=\Psi{\bf v}\;.$$
Moreover, if ${\bf u}$ is a classical functional, then so is ${\bf v}$.
\end{lemma}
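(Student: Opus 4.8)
The plan is to verify the identity $D(\Phi{\bf v})=\Psi{\bf v}$ directly, by evaluating both functionals on an arbitrary test polynomial $p\in\mathcal P$ and reducing the resulting scalar identity to the hypothesis $D(\phi{\bf u})=\psi{\bf u}$ through the change of variable $x\mapsto(x-B)/A$. Since a functional in $\mathcal P'$ is uniquely determined by its action on $\mathcal P$, it suffices to prove $\langle D(\Phi{\bf v}),p\rangle=\langle\Psi{\bf v},p\rangle$ for every $p$.

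First I would unwind Definitions \ref{taub} and \ref{ha} to record that ${\bf v}=(\bm{h}_{A^{-1}}\circ\bm{\tau}_{-B}){\bf u}$ acts by $\langle{\bf v},p\rangle=\langle{\bf u},\,p((x-B)/A)\rangle$ for $p\in\mathcal P$. Then come the two elementary identities that make the definitions of $\Phi$ and $\Psi$ natural: writing $y=(x-B)/A$, one has $\Phi(y)=K\phi(x)$ and $\Psi(y)=KA\psi(x)$ identically; and, by the chain rule, if $g(x):=p((x-B)/A)$ then $g'(x)=A^{-1}p'((x-B)/A)$. Feeding these into the definitions of the distributional derivative and of left multiplication yields
\begin{equation*}
\langle D(\Phi{\bf v}),p\rangle=-\langle{\bf v},\Phi p'\rangle=-KA\langle\phi{\bf u},g'\rangle=KA\langle D(\phi{\bf u}),g\rangle=KA\langle\psi{\bf u},g\rangle ,
\end{equation*}
whereas a parallel (and shorter) computation gives $\langle\Psi{\bf v},p\rangle=\langle{\bf v},\Psi p\rangle=KA\langle\psi{\bf u},g\rangle$. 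As the two sides agree for all $p$, we conclude $D(\Phi{\bf v})=\Psi{\bf v}$.

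For the last assertion, suppose ${\bf u}$ is classical; by Definition \ref{def-u-classical} it is regular and we may apply the first part to a pair $(\phi,\psi)$ with $\deg\phi\leq2$ and $\deg\psi=1$ realizing the Pearson-type equation for ${\bf u}$. Since $A\neq0$ and $K\neq0$, the affine substitution $x\mapsto Ax+B$ followed by multiplication by a nonzero constant preserves degrees, so $\deg\Phi\leq2$ and $\deg\Psi=1$; together with the equation $D(\Phi{\bf v})=\Psi{\bf v}$ just established, this is condition (ii) of Definition \ref{def-u-classical} for ${\bf v}$. Condition (i), the regularity of ${\bf v}$, is precisely Theorem \ref{OPequiv} applied with $a=A$ and $b=B$: if $\{P_n\}_{n\geq0}$ is the monic OP with respect to ${\bf u}$, then $\{Q_n\}_{n\geq0}$ with $Q_n(x)=A^{-n}P_n(Ax+B)$ is the monic OP with respect to ${\bf v}$. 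Hence ${\bf v}$ is classical.

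The argument is essentially bookkeeping; the only point that demands care --- the ``hard part'', such as it is --- is keeping the inverse substitution $x\mapsto(x-B)/A$ straight and matching the chain-rule factor $A$ coming from differentiating $g(x)=p((x-B)/A)$ against the factor $A$ deliberately built into the definition of $\Psi$, all while tracking the sign produced by the distributional derivative. No deeper difficulty is expected.
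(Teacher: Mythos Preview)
Your proof is correct and follows essentially the same route as the paper's: both verify the functional identity by computing the action on test polynomials and reducing to $D(\phi{\bf u})=\psi{\bf u}$ via the substitution $x\mapsto(x-B)/A$, and both invoke Theorem~\ref{OPequiv} for the regularity of ${\bf v}$. The only cosmetic difference is that the paper tests against the monomials $x^n$ while you test against an arbitrary $p\in\mathcal P$, which is marginally cleaner but not materially different.
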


\begin{proof}
Since ${\bf u}$ and ${\bf v}$ fulfill (\ref{u-equiv-v1}), then
$$
\langle {\bf v},x^n\rangle=\Big\langle {\bf u},\Big(\mbox{$\frac{x-B}{A}$}\Big)^n\,\Big\rangle\;,
\quad n\in\mathbb{N}_0\;.
$$
Therefore, for each $n\in\mathbb{N}_0$, we have
\begin{align*}
\langle D(\Phi{\bf v}),x^n\rangle &= -n\langle {\bf v},\Phi(x)x^{n-1}\rangle
=-n\big\langle {\bf u},\big(\tau_{B}\circ h_{A^{-1}}\big)\big(\Phi(x)x^{n-1}\big)\big\rangle \\ 
 &= -n\Big\langle {\bf u},\Phi\Big(\mbox{$\frac{x-B}{A}$}\Big)\Big(\mbox{$\frac{x-B}{A}$}\Big)^{n-1}\Big\rangle\\
&=-\Big\langle{\bf u},K\phi(x)\cdot A\frac{{\rm d}}{{\rm d}x}\Big\{\Big(\mbox{$\frac{x-B}{A}$}\Big)^{n}\Big\}\Big\rangle\\ 
 &= KA\Big\langle D\big(\phi(x){\bf u}\big),\Big(\mbox{$\frac{x-B}{A}$}\Big)^{n}\Big\rangle
 =KA\Big\langle \psi(x) {\bf u},\Big(\mbox{$\frac{x-B}{A}$}\Big)^{n}\Big\rangle \\ 
 &= \Big\langle {\bf u},\Psi\Big(\mbox{$\frac{x-B}{A}$}\Big)\Big(\mbox{$\frac{x-B}{A}$}\Big)^{n}\Big\rangle
 =\big\langle {\bf u},\big(\tau_{B}\circ h_{A^{-1}}\big)\big(\Psi(x)x^{n}\big)\big\rangle \\ 
 &= \langle {\bf v},\Psi(x) x^n\rangle=\langle \Psi{\bf v},x^n\rangle \;.
\end{align*}

Finally, the last sentence stated in the lemma follows by using Theorem \ref{OPequiv}.
\end{proof}

\begin{theorem}[canonical representatives of the classical functionals]\label{canonic-forms-classical}
Let ${\bf u}\in\mathcal{P}^\prime$ be a classical functional,
so that ${\bf u}$ fulfils
\begin{equation}\label{Pears1}
D(\phi{\bf u})=\psi{\bf u}\;,
\end{equation}
where $\phi(x)=ax^2+bx+c$ and $\psi(x)=px+q$, subject to the regularity conditions
\begin{equation}\label{Pears1reg}
na+p\neq0\;,\quad\phi\left(-\frac{nb+q}{2na+p}\right)\neq0\;,
\quad\forall n\in\mathbb{N}_0\;.
\end{equation}
Then, there exists a regular functional ${\bf v}\in\mathcal{P}'$ such that
\begin{equation}\label{Pears2}
{\bf u}\sim{\bf v}\;,\quad D(\Phi{\bf v})=\Psi{\bf v}\;,
\end{equation}
where, for each classical functional determined by the pair $(\phi,\psi)$,
the corresponding pair $(\Phi,\Psi)$ is given by Table \ref{Table1}.
More precisely, setting
$$\Delta=b^2-4ac\;;\quad d=\psi\left(-\mbox{$\frac{b}{2a}$}\right)\;\;\mbox{\rm if}\;\; a\neq0 \;,$$
the following holds:
\begin{itemize}
\item[\,]\hspace*{-2em}{\rm 1.\,(Hermite)} if $a=b=0$, then:
$$
{\bf v}=\big(\bm{h}_{\sqrt{-p/(2c)}}\circ\bm{\tau}_{q/p}\big){\bf u}={\bf u}_{{}_H}\;;
$$
\item[\,]\hspace*{-2em}{\rm 2.\,(Laguerre)} if $a=0$ and $b\neq0$, then:
$$
{\bf v}=\big(\bm{h}_{-p/b}\circ\bm{\tau}_{c/b}\big){\bf u}={\bf u}_{{}_L}^{(\alpha)}\;,\quad\alpha=-1+(qb-pc)/b^2\;;
$$
\item[\,]\hspace*{-2em}{\rm 3.\,(Bessel)} if $a\neq0$ and $\Delta=0$, then:
$$
{\bf v}=\big(\bm{h}_{2a/d}\circ\bm{\tau}_{b/(2a)}\big){\bf u}={\bf u}_{{}_B}^{(\alpha)}\;,\quad\alpha=-2+p/a\;;
$$
\item[\,]\hspace*{-2em}{\rm 4.\,(Jacobi)} if $a\neq0$ and $\Delta\neq0$, then:
$$
{\bf v}=\big(\bm{h}_{-2a/\sqrt{\Delta}}\circ\bm{\tau}_{b/(2a)}\big){\bf u}={\bf u}_{{}_J}^{(\alpha,\beta)}\;,
$$
$$
\alpha=-1+p/(2a)-d/\sqrt{\Delta}\;,\quad
\beta=-1+p/(2a)+d/\sqrt{\Delta}\;.
$$
\end{itemize}
\end{theorem}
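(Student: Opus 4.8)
The plan is to prove Theorem~\ref{canonic-forms-classical} by performing, in each of the four mutually exclusive cases determined by the signs of $a$ and $\Delta=b^2-4ac$, an explicit affine change of variable that reduces the pair $(\phi,\psi)$ to the corresponding canonical pair $(\Phi,\Psi)$ in Table~\ref{Table1}. The engine of the reduction is Lemma~\ref{equiv-u-v}: if ${\bf v}=(\bm{h}_{A^{-1}}\circ\bm{\tau}_{-B}){\bf u}$, then ${\bf v}$ satisfies $D(\Phi{\bf v})=\Psi{\bf v}$ with $\Phi(x)=K\phi(Ax+B)$ and $\Psi(x)=KA\,\psi(Ax+B)$ for any $K\neq0$. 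So for each case I would: (i) guess the correct $B$ (a root or the vertex of $\phi$) and then the correct $A$ and $K$ so that $K\phi(Ax+B)$ becomes exactly the canonical $\Phi$; (ii) compute $\Psi(x)=KA\,\psi(Ax+B)$ and read off the parameter(s) $\alpha$ (and $\beta$) by matching coefficients with the canonical $\Psi$; (iii) check that the regularity conditions \eqref{Pears1reg} for ${\bf u}$ translate, under this substitution, into the regularity conditions in Table~\ref{Table1} for the canonical form, so that ${\bf v}$ is indeed a legitimate classical functional of the named type. That ${\bf u}\sim{\bf v}$ is immediate from the definition \eqref{u-equiv-v1}, and that ${\bf v}$ is regular (hence classical) follows from Lemma~\ref{equiv-u-v} together with Theorem~\ref{OPequiv}.

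Concretely: in the Hermite case $a=b=0$, so $\phi(x)=c$ with $c\neq0$ (else $\phi\equiv0$, forcing $\psi\equiv0$ by Lemma~\ref{poly-grau21} applied to the regular functional, contradiction) and $p\neq0$ (regularity $d_0=p\neq0$); take $B=q/p$ to kill the constant term of $\psi$, then choose $A$ and $K$ with $Kc=1$ and $KAp=-2A^2$, i.e. $A=\sqrt{-p/(2c)}$, giving $(\Phi,\Psi)=(1,-2x)$. In the Laguerre case $a=0$, $b\neq0$: take $B=c/b$ so that $\phi(Ax+B)=bAx$, choose $K$ with $KbA=A$ (so $K=1/b$) — wait, we need $\Phi(x)=x$, so $KbA=1$; then $A=-p/b$ is forced by requiring the leading coefficient of $\Psi$ to be $-1$, and $K=1/(bA)=-1/p$; matching the constant term of $\Psi$ yields $\alpha=-1+(qb-pc)/b^2$. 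In the Bessel case $a\neq0$, $\Delta=0$: $\phi$ has a double root at $-b/(2a)$, so $B=b/(2a)$ makes $\phi(Ax+B)=aA^2x^2$; pick $K=1/(aA^2)$ to get $\Phi(x)=x^2$, and $A$ is then fixed by matching the leading coefficient $\alpha+2$ of $\Psi$; a short computation with $d=\psi(-b/(2a))=q-pb/(2a)$ gives $A=2a/d$ and $\alpha=-2+p/a$. Finally in the Jacobi case $a\neq0$, $\Delta\neq0$: the roots of $\phi$ are $(-b\pm\sqrt{\Delta})/(2a)$; take $B=b/(2a)$ and $A=-2a/\sqrt{\Delta}$ so that the roots of $\phi(Ax+B)$ land at $\pm1$, and $K$ is chosen so that $\Phi(x)=1-x^2$; expanding $\Psi(x)=KA\,\psi(Ax+B)$ and matching against $-(\alpha+\beta+2)x+(\beta-\alpha)$ gives the stated $\alpha,\beta$.

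The step I expect to require the most care is (iii), the translation of the regularity conditions. One must verify the identity
\begin{equation*}
\Phi\!\left(-\frac{nb'+q'}{2na'+p'}\right)=K\,\phi\!\left(-\frac{nb+q}{2na+p}\right),\qquad n a'+p'=A(na+p)\neq0,
\end{equation*}
where $(a',b',c',p',q')$ are the coefficients of $(\Phi,\Psi)$; equivalently, one tracks how the quantities $d_n=na+p$ and the arguments $-e_n/d_{2n}$ of Theorem~\ref{vCOP-Thm1} transform under the affine substitution. Since the map $x\mapsto (x-B)/A$ carries the "bad points'' of $\phi$ bijectively to those of $\Phi$ and multiplies $d_n$ by the nonzero constant $A$, the two systems of regularity conditions are equivalent; but making this bookkeeping precise — in particular checking, in the Laguerre, Bessel and Jacobi cases, that the exceptional sets $\{-\alpha\in\mathbb{N}\}$, $\{-(\alpha+1)\in\mathbb{N}\}$, $\{-\alpha,-\beta,-(\alpha+\beta+1)\in\mathbb{N}\}$ are exactly the images of the forbidden values of $n$ in \eqref{Pears1reg} under the formulas for $\alpha,\beta$ found in step (ii) — is the genuinely substantive bit of computation. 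Everything else is a matter of bookkeeping with Lemma~\ref{equiv-u-v}, and the existence part (${\bf v}$ exists and is regular) is automatic once the formulas are in hand.
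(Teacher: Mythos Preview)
Your approach is exactly the one the paper takes: invoke Lemma~\ref{equiv-u-v} as the reduction engine, and in each of the four cases solve the system coming from $\Phi(x)=K\phi(Ax+B)$, $\Psi(x)=KA\psi(Ax+B)$ for $K,A,B$ (and the parameters $\alpha,\beta$), then check that the regularity conditions \eqref{Pears1reg} become precisely those in Table~\ref{Table1}. There is no missing idea.

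A word of caution on bookkeeping, since you flagged step~(iii) as the delicate one but the slips are actually in steps~(i)--(ii). With the convention ${\bf v}=(\bm{h}_{A^{-1}}\circ\bm{\tau}_{-B}){\bf u}$ and $\Phi(x)=K\phi(Ax+B)$, the shifts $B$ must be the \emph{negatives} of what you wrote: e.g.\ in the Laguerre case $\phi(Ax+B)=bAx$ forces $B=-c/b$ (not $c/b$), and similarly $B=-q/p$, $B=-b/(2a)$ in the other cases; the subscripts $\tau_{q/p}$, $\tau_{c/b}$, $\tau_{b/(2a)}$ in the statement are $-B$, not $B$. Likewise the values you record for $A$ are in several places actually $A^{-1}$ (the subscript on $\bm{h}$). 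Finally, the transformation law for $d_n$ is $n a'+p'=KA^2(na+p)$, not $A(na+p)$; this does not affect the nonvanishing conclusion, but you will want the correct constant when you verify the identity $\Phi\bigl(-\tfrac{nb'+q'}{2na'+p'}\bigr)=K\,\phi\bigl(-\tfrac{nb+q}{2na+p}\bigr)$, which is indeed true and is the clean way to carry out your step~(iii).
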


\begin{proof}
Taking into account Lemma \ref{equiv-u-v},
the theorem will be proved if we are able to show that,
for each given pair $(\phi,\psi)$, and for each
corresponding pair $(\Phi,\Psi)$ given by Table \ref{Table1}---where the
``corresponding pair'' $(\Phi,\Psi)$ is the one in the table such that $\phi$ and $\Phi$
have the same degree and their zeros the same multiplicity---,
there exist $A,K\in\mathbb{C}\setminus\{0\}$ and $B\in\mathbb{C}$ such that the relations
\begin{equation}\label{PhiPsiK1}
\Phi(x)=K\phi(Ax+B)\;,\quad \Psi(x)=KA\psi(Ax+B)=KA^2px+KA(Bp+q)
\end{equation}
hold, for appropriate choices of the parameters $\alpha$ and $\beta$
appearing in Table \ref{Table1} for the Laguerre, Bessel, and Jacobi cases.
Indeed, considering the four possible cases 
determined by the polynomial $\phi$, we have:
\smallskip

1. Assume $a=b=0$, i.e., $\phi(x)=c$. The regularity conditions (\ref{Pears1reg})
ensure that $p\neq0$ and $c\neq0$. Therefore, since in this case we require
$(\Phi,\Psi)=(1,-2x)$, from (\ref{PhiPsiK1}) we obtain the equations
$$
1=Kc\;,\quad -2=KA^2p\;,\quad 0=Bp+q\;.
$$
A solution of this system of equations is
$$
K=1/c\;,\quad A=\sqrt{-2c/p}\;,\quad B=-q/p\;,
$$
which gives the desired result for the Hermite case, by Lemma \ref{equiv-u-v}.
\smallskip

2. Assume $a=0$ and $b\neq0$, so that $\phi(x)=bx+c$.
Since in this case we require $(\Phi,\Psi)=(x,-x+\alpha+1)$, from (\ref{PhiPsiK1}) we obtain
$$
1=KAb\;,\quad 0=bB+c\;,\quad -1=KA^2p\;,\quad \alpha+1=KA(Bp+q)\;.
$$
Solving this system we find
$$
K=-p/b^2\;,\quad B=-c/b\;,\quad A=-b/p\;,\quad \alpha=-1+(qb-pc)/b^2\;.
$$
Notice that, in this case,
$$
d_n=p\;,\quad \phi\left(-\frac{nb+q}{2na+p}\right)
=-\frac{b^2}{p}\big(n+\alpha+1\big)\;,
$$
hence the regularity conditions (\ref{Pears1reg}) ensure that $p\neq0$
(and so $K$ and $A$ are well defined, being both nonzero complex numbers) and $-\alpha\not\in\mathbb{N}$.
\smallskip

3. Assume $a\neq0$ and $\Delta=0$. Then $\phi(x)=a\Big(x+\frac{b}{2a}\Big)^2$.
In this case we require $(\Phi,\Psi)=\big(x^2,(\alpha+2)x+2\big)$, hence from (\ref{PhiPsiK1}) we obtain
$$
1=KA^2a\;,\quad 0=B+b/(2a)\;,\quad \alpha+2=KA^2p\;,\quad 2=KA(Bp+q)\;.
$$
Therefore, taking into account that
$d=\psi\left(-\mbox{$\frac{b}{2a}$}\right)=(2aq-pb)/(2a)$, we deduce
$$
K=4a/d^2\;,\quad B=-b/(2a)\;,\quad A=d/(2a)\;,\quad \alpha=-2+p/a\;.
$$
In this case we have
$$
d_n=a(n+\alpha+2)\;,\quad
\phi\left(-\frac{nb+q}{2na+p}\right)=\frac{d^2}{a(2n+\alpha+2)^2}\;,
$$
hence conditions (\ref{Pears1reg}) ensure that $-(\alpha+1)\not\in\mathbb{N}$
and $d\neq0$, and so, in particular, $K$ is well defined, being both $K$ and $A$ nonzero complex numbers.
\smallskip

4. Finally, assume $a\neq0$ and $\Delta\neq0$.
Writing $\phi(x)=a\Big[\Big(x+\frac{b}{2a}\Big)^2-\frac{\Delta}{4a^2}\Big]$, since
in this case we require $(\Phi,\Psi)=\big(1-x^2,-(\alpha+\beta+2)x+\beta-\alpha\big)$,
from (\ref{PhiPsiK1}) we obtain
$$
\begin{array}{c}
-1=KA^2a\;,\quad 0=B+b/(2a)\;,\quad 1=Ka\Big[\Big(B+\frac{b}{2a}\Big)^2-\frac{\Delta}{4a^2}\Big]\;, \\ [0.5em]
-(\alpha+\beta+2)=KA^2p\;,\quad \beta-\alpha=KA(Bp+q)\;.
\end{array}
$$
A solution of this system of five equations is
$$
\begin{array}{c}
K=-4a/\Delta\;,\quad B=-b/(2a)\;,\quad A=-\sqrt{\Delta}/(2a)\;, \\ [0.25em]
\alpha=-1+p/(2a)-d/\sqrt{\Delta}\;,\quad \beta=-1+p/(2a)+d/\sqrt{\Delta}\;.
\end{array}
$$
(We choose $A$ with the minus sign since whenever $(\phi,\psi)=(\Phi,\Psi)$ that choice implies $A=1$
and $B=0$, hence ${\bf u}={\bf v}={\bf u}_J^{(\alpha,\beta)}$, and so it is a more natural choice.)
Adding and subtracting the last two equations for $\alpha$ and $\beta$,
we find $\alpha+\beta+2=p/a$ and $\alpha-\beta=-2d/\sqrt{\Delta}$, hence we deduce
$$
d_n=a(n+\alpha+\beta+2)\;,\quad
\phi\left(-\frac{nb+q}{2na+p}\right)=-\frac{\Delta}{a}\frac{(n+\alpha+1)(n+\beta+1)}{(2n+\alpha+\beta+2)^2}\;,
$$
Therefore, conditions (\ref{Pears1reg}) ensure that $-(\alpha+\beta+1)\not\in\mathbb{N}$,
$-\alpha\not\in\mathbb{N}$, and $-\beta\not\in\mathbb{N}$. This completes the proof.
\end{proof}

\begin{remark}\em
It follows from the proof of Theorem \ref{canonic-forms-classical} that
the parameters $\alpha$ and $\beta$ defined in the statement of this theorem (in cases 2, 3, and 4)
fulfil the regularity conditions appearing in Table \ref{Table1}.
\end{remark}

The preceding theorem allows us to classify each classical functional according with
the degree of the polynomial $\phi$ appearing in equation $(\ref{EDClassic1})$.

\begin{corollary}
Let ${\bf u}$ be a classical functional, fulfilling $(\ref{EDClassic1})$--$(\ref{grauPhiPsi})$.
\begin{enumerate}
\item[{\rm (i)}] if $\deg\phi=0$ (hence $\phi$ is a nonzero constant), then ${\bf u}\sim{\bf u}_H\,$;
\item[{\rm (ii)}] if $\deg\phi=1$, then ${\bf u}\sim{\bf u}_L^{(\alpha)}\,$ for some $\alpha$;
\item[{\rm (iii)}] if $\deg\phi=2$ and $\phi$ has simple zeros, then ${\bf u}\sim{\bf u}_J^{(\alpha,\beta)}\,$ for some pair $(\alpha,\beta)$;
\item[{\rm (iv)}] if $\deg\phi=2$ and $\phi$ has a double zero, then ${\bf u}\sim{\bf u}_B^{(\alpha)}\,$ for some $\alpha$.
\end{enumerate}
\end{corollary}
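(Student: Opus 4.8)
The plan is to read the result off Theorem \ref{canonic-forms-classical}, once one observes that the four alternatives in the corollary are exactly the four cases distinguished there, merely rephrased in terms of the geometry of $\phi$ instead of its coefficients.

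First I would fix the setup. Since ${\bf u}$ is classical, it is regular and satisfies $D(\phi{\bf u})=\psi{\bf u}$ with $\deg\phi\le2$ and $\deg\psi=1$; in particular, by Lemma \ref{poly-grau21}, $\phi$ is not the zero polynomial, so $\deg\phi\in\{0,1,2\}$ and the four cases (i)--(iv) are exhaustive and mutually exclusive. Writing $\phi(x)=ax^2+bx+c$ and $\Delta=b^2-4ac$, the dictionary is as follows: $\deg\phi=0$ amounts to $a=b=0$ (and then necessarily $c\neq0$); $\deg\phi=1$ amounts to $a=0$ and $b\neq0$; and $\deg\phi=2$ amounts to $a\neq0$, in which case $\phi$ has a double zero precisely when $\Delta=0$ and two simple zeros precisely when $\Delta\neq0$.

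Next I would invoke Theorem \ref{canonic-forms-classical} case by case. Item~1 of that theorem ($a=b=0$) produces a functional ${\bf v}$ with ${\bf u}\sim{\bf v}={\bf u}_H$, which is (i). Item~2 ($a=0$, $b\neq0$) gives ${\bf u}\sim{\bf u}_L^{(\alpha)}$ with $\alpha=-1+(qb-pc)/b^2$, which is (ii). Item~3 ($a\neq0$, $\Delta=0$) gives ${\bf u}\sim{\bf u}_B^{(\alpha)}$ with $\alpha=-2+p/a$, which is (iv). Item~4 ($a\neq0$, $\Delta\neq0$) gives ${\bf u}\sim{\bf u}_J^{(\alpha,\beta)}$ with the parameters displayed there, which is (iii). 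The regularity conditions of Table \ref{Table1} are automatically satisfied by the Remark following Theorem \ref{canonic-forms-classical}, so each of these conclusions is legitimate; and since $\sim$ is an equivalence relation, ``${\bf u}\sim{\bf v}$ with ${\bf v}$ a canonical form'' is the same as saying ${\bf u}$ is equivalent to a canonical form. This completes the argument.

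There is no genuine obstacle here: the whole content of the corollary is this dictionary between the degree and zero-multiplicity structure of $\phi$ and the vanishing pattern of $a$, $b$ and $\Delta$, together with the fact that $\phi\not\equiv0$, which makes the trichotomy on $\deg\phi$ exhaustive. The only two points that deserve an explicit word are the appeal to Lemma \ref{poly-grau21} (ruling out $\phi\equiv0$, hence also $\deg\psi\neq1$) and the remark that $\sim$, being induced by the group of affine changes of variable acting on $\mathcal{P}'$, is reflexive, symmetric and transitive, so that the equivalence produced by Theorem \ref{canonic-forms-classical} is exactly the one appearing in the statement.
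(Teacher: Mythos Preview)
Your proposal is correct and matches the paper's approach exactly: the paper does not give a separate proof of this corollary, but simply introduces it with the sentence ``The preceding theorem allows us to classify each classical functional according with the degree of the polynomial $\phi$,'' so the intended argument is precisely the dictionary you spell out between the degree/zero-multiplicity structure of $\phi$ and the cases $a=b=0$, $a=0\neq b$, $a\neq0=\Delta$, $a\neq0\neq\Delta$ of Theorem~\ref{canonic-forms-classical}. One small wording glitch: in your last paragraph, ``hence also $\deg\psi\neq1$'' should presumably read that Lemma~\ref{poly-grau21} also \emph{ensures} $\deg\psi=1$.
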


The monic OP with respect to the canonical representatives ${\bf u}_H$, ${\bf u}_L^{(\alpha)}$,
${\bf u}_J^{(\alpha,\beta)}$, and ${\bf u}_B^{(\alpha)}$ will be denoted by
$\{\widehat{H}_n\}$, $\{\widehat{L}_n^{(\alpha)}\}$, $\{\widehat{P}_n^{(\alpha,\beta)}\}$,
and $\{\widehat{B}_n^{(\alpha)}\}$ (resp.), and they will be called the (monic)
{\sl Hermite}, {\sl Laguerre}, {\sl Jacobi}, and {\sl Bessel} polynomials.
Table \ref{Table3} summarizes the corresponding parameters appearing in all
characterizations presented in Theorem \ref{ThmClassicalOP}. In view of Theorem \ref{canonic-forms-classical} and Theorem \ref{OPequiv},
we may now justify a sentence made at the beginning of the section regarding Hermite, Laguerre, Jacobi, and Bessel polynomials. As Maroni said in an interview when asked about Bessel polynomials: {\em ``comme dans le roman d'Alexandre Dumas, les trois mousquetaires \'etaient quatre en r\'ealité''}.

\begin{table}
\centering
\begin{tabular}{|>{\columncolor[gray]{0.95}}c|c|c|c|c|}
\hline \rowcolor[gray]{0.95}
\rule{0pt}{1.2em} & $\widehat{H}_n$ & $\widehat{L}_n^{(\alpha)}$ & $\widehat{P}_n^{(\alpha,\beta)}$ & $\widehat{B}_n^{(\alpha)}$\\
\hline
\rule{0pt}{1.2em} $\lambda_n$ & ${\scriptstyle{2n}}$ & ${\scriptstyle{n}}$ & ${\scriptstyle{n(n+\alpha+\beta+1)}}$ & ${\scriptstyle{-n(n+\alpha+1)}}$ \\
\rule{0pt}{1.5em} $\beta_n$ & ${\scriptstyle{0}}$ & ${\scriptstyle{2n+\alpha+1}}$ &
    $\frac{\beta^2-\alpha^2}{(2n+\alpha+\beta)(2n+2+\alpha+\beta)}$ &
    $\frac{-2\alpha}{(2n+\alpha)(2n+2+\alpha)}$  \\
\rule{0pt}{1.75em} $\gamma_n$ & $\frac{n}{2}$ & ${\scriptstyle{n(n+\alpha)}}$ &
    $\frac{4n(n+\alpha)(n+\beta)(n+\alpha+\beta)}{(2n+\alpha+\beta-1)
    (2n+\alpha+\beta)^2(2n+\alpha+\beta+1)}$ &
    $\frac{-4n(n+\alpha)}{(2n+\alpha-1)(2n+\alpha)^2(2n+\alpha+1)}$ \\
\rule{0pt}{1em} $a_n$ & ${\scriptstyle{0}}$ & ${\scriptstyle{0}}$ & ${\scriptstyle{-n}}$ & ${\scriptstyle{n}}$ \\
\rule{0pt}{1.25em} $b_n$ & ${\scriptstyle{0}}$ & ${\scriptstyle{n}}$ & $\frac{2(\alpha-\beta)n(n+\alpha+\beta+1)}{(2n+\alpha+\beta)(2n+2+\alpha+\beta)}$ & $\frac{-4n(n+\alpha+1)}{(2n+\alpha)(2n+2+\alpha)}$ \\
\rule{0pt}{1.75em} $c_n$ & ${\scriptstyle{n}}$ & ${\scriptstyle{n(n+\alpha)}}$ & $\frac{4n(n+\alpha)(n+\beta)(n+\alpha+\beta)(n+\alpha+\beta+1)}{(2n+\alpha+\beta-1)
    (2n+\alpha+\beta)^2(2n+\alpha+\beta+1)}$ & $\frac{4n(n+\alpha)(n+\alpha+1)}{(2n+\alpha-1)(2n+\alpha)^2(2n+\alpha+1)}$ \\
\rule{0pt}{1.5em} $r_n^{[1]}$ & ${\scriptstyle{0}}$ & ${\scriptstyle{n}}$ & $\frac{2(\alpha-\beta)n}{(2n+\alpha+\beta)(2n+2+\alpha+\beta)}$ & $\frac{4n}{(2n+\alpha)(2n+2+\alpha)} $ \\
\rule{0pt}{1.75em} $s_n^{[1]}$ & ${\scriptstyle{0}}$ & ${\scriptstyle{0}}$ & $\frac{-4(n-1)n(n+\alpha)(n+\beta)}{(2n+\alpha+\beta-1)(2n+\alpha+\beta)^2(2n+\alpha+\beta+1)}$ & $\frac{4(n-1)n}{(2n+\alpha-1)(2n+\alpha)^2(2n+\alpha+1)}$ \\
\rule{0pt}{1.2em} $h_n$ & ${\scriptstyle{-2}}$ & ${\scriptstyle{-1}}$ & ${\scriptstyle{-(2n+\alpha+\beta-1)}} $ & ${\scriptstyle{2n+\alpha-1}} $ \\
\rule{0pt}{1.5em} $t_n$ & ${\scriptstyle{n}}$ & ${\scriptstyle{n(n+\alpha)}}$ & $\frac{4n(n+\alpha)(n+\beta)(n+\alpha+\beta)}{(2n+\alpha+\beta-1)(2n+\alpha+\beta)^2}$ & $\frac{4n(n+\alpha)}{(2n+\alpha-1)(2n+\alpha)^2}$ \\
\rule{0pt}{1.5em} $k_n$ & $\frac{(-1)^n}{2^n}$ & ${\scriptstyle{(-1)^n}}$ & $\frac{(-1)^n}{(n+\alpha+\beta+1)_n}$ & $\frac{1}{(n+\alpha+1)_n}$  \\ [0.5em]
\hline
\end{tabular}
\medskip
\caption{Parameters for the classical monic OP
appearing in Theorem \ref{ThmClassicalOP} with respect to the canonical forms 
given in Table \ref{Table1}.}\label{Table3}
\end{table}


\section*{Acknowledgements}
This work is supported by the Centre for Mathematics of the University of Coimbra-UIDB/00324/2020, funded by the Portuguese Government through FCT/ MCTES.

\end{document}